\documentclass[11pt,reqno]{amsart}
\usepackage{graphicx,amsmath,amsthm,verbatim,amssymb,lineno,titletoc,pgfplots}
\pgfplotsset{compat=newest}
\usepgfplotslibrary{fillbetween}
 
\usepackage{mathrsfs}
\usepackage{adjustbox}
\usepackage{ytableau}
\usepackage{hyperref}
\usepackage{array}
\usepackage{tikz}
\usetikzlibrary{shapes.multipart,patterns,arrows}

\usepackage[font=small]{caption}
\hypersetup{colorlinks,linkcolor={red},citecolor={olive},urlcolor={red}}
\usepackage{amssymb}
\usepackage[T1]{fontenc} 
\usepackage{fourier} 
\usepackage[english]{babel} 
\usepackage{amsmath,amsfonts,amsthm,bbm} 
\usepackage{appendix} 

\usepackage[all]{xy}
\usepackage{enumerate}
\usepackage{mathrsfs}
\usepackage[english]{babel}
\usepackage{multirow}
\usepackage{float}
\usepackage{enumitem}
\usepackage{graphicx,accents}

\usepackage{mathtools}

\mathtoolsset{showonlyrefs}

\usepackage[top=1.3in, left=1.6in, right=1.6in, bottom=1.3in]{geometry}

\newcolumntype{M}[1]{>{\centering\arraybackslash}m{#1}}
\newcolumntype{N}{@{}m{0pt}@{}}

\addtolength{\oddsidemargin}{-.3in}
\addtolength{\evensidemargin}{-.3in}
\addtolength{\textwidth}{.6in}

\def\liminf{\mathop{\rm lim\,inf}\limits}

\def\R{\mathbb{R}}

\def\P{\mathbb{P}}
\def\E{\mathbb{E}}
\def\1{\mathbf{1}}

\def\L{\cev{\bullet}}
\def\R{\vec{\bullet}}
\def\B{\dot{\bullet}}
\def\b{\bullet}

\DeclareRobustCommand{\cev}[1]{\reflectbox{\ensuremath{\vec{\reflectbox{\ensuremath{#1}}}}}}
\usepackage{theoremref}

\newtheorem{theorem}{Theorem}
\newtheorem{lemma}[theorem]{Lemma}
\newtheorem{prop}[theorem]{Proposition}

\newtheorem{remark}[theorem]{\textbf{\textup{Remark}}}
\theoremstyle{definition}

\DeclareRobustCommand{\cev}[1]{\reflectbox{\ensuremath{\vec{\reflectbox{\ensuremath{#1}}}}}}

\DeclareMathOperator{\BA}{\mathbf{BA}}

\newcommand{\han}[1]{\textcolor{blue}{#1}}

\allowdisplaybreaks


\newcommand{\f}{\frac}

\begin{document}
	
	\title[The phase structure of asymmetric ballistic annihilation]{The phase structure of asymmetric ballistic annihilation}

	\author[M.~Junge]{Matthew Junge}
	\address{Matthew Junge, Department of Mathematics, Baruch College}
	\email{\texttt{Matthew.Junge@baruch.cuny.edu}}

	\author[H.~Lyu]{Hanbaek Lyu}
	\address{Hanbaek Lyu, Department of Mathematics, University of Wisconsin - Madison}
	\email{\texttt{hlyu@math.wisc.edu}}

	
	\subjclass[2010]{60K35, 82C22}

	\begin{abstract}
		Ballistic annihilation is an interacting system in which particles placed throughout the real line
		move at preassigned velocities and annihilate upon colliding. The longstanding conjecture that in the symmetric three-velocity setting there exists a phase transition for the survival of middle-velocity particles was recently resolved by Haslegrave, Sidoravicius, and Tournier. We develop a framework based on a mass transport principle to analyze three-velocity ballistic annihilation with asymmetric velocities assigned according to an asymmetric probability measure. We show the existence of a phase transition in all cases by deriving universal bounds. In particular, all middle-speed particles perish almost surely if their initial density is less than $1/5$, regardless of the velocities, relative densities, and spacing of initial particles. We additionally prove the continuity of several fundamental statistics as the probability measure is varied. 
	\end{abstract}

	
	\maketitle

	\section{Introduction}
	\label{Introduction}

	\emph{Ballistic annihilation} (BA) starts with particles placed throughout the real line with independent spacings according to some probability measure $\mu$ supported on $(0,\infty)$. Each particle is independently assigned a real-valued velocity via a probability measure $\nu$. Particles simultaneously begin moving at their assigned velocity and mutually annihilate upon colliding. This relatively simple to define system has formidable long-range dependence that is both interesting and challenging to understand rigorously. 
	%
	

	The process was introduced by Elskens and Frisch with just two velocities \cite{2speeds}. Their motivation was to understand how the laws of motion affect the space-time evolution of particle types. They focused on ballistic motion with annihilation because it is an extreme case of diffusion-limited reactions that were being studied around the same time \cite{recombination, dla_fractal,anti-particle}.  A fundamental question is how $\mu$  and $\nu$ relate to the probability the particle at the origin survives for all time.
	Ballistic annihilation turned out to be a rather nuanced process and received considerable attention from physicists (see \cite{b3,b4,b5,b7,b12} for a start). 
	
	Ben-Naim, Redner, and Leyvraz in  \cite{continuous} and later Krapivsky and Sire \cite{b9} considered atomless $\nu$. They conjectured that survival probabilities respond continuously to perturbations in the velocity measure. 
	Unlike the continuous case, ballistic annihilation with $\nu$ supported on a finite set was predicted to exhibit abrupt phase transitions;
	a given particle type persists above a certain critical initial density and perishes below it \cite{discrete}. 


	With two velocities, a comparison to simple random walk ensures that the phase transition occurs when particle types are in balance. However, the three-velocity case becomes remarkably more complicated. Let $v>0$ and consider velocities in $\{-v,0,1\}$, sampled independently for each particle according to the {general probability measure}
	\begin{align}\nu(p,\lambda,v) := (1-\lambda)(1-p) \delta_{-v} + p \delta_0 + \lambda (1-p) \delta_{1}\label{eq:nu}\end{align}
	with parameters $p, \lambda \in (0,1)$. These three parameter systems are representative of all three velocity ballistic annihilation systems. Indeed, as the collision pairings in BA are invariant under translation of the underlying speed set, the system with speeds $v_1<v_2<v_3$ is equivalent to the system generated from \eqref{eq:nu} with $v=(v_1 - v_2)/(v_3 - v_2)$.

	
	We will refer to particles with velocity $0$, $+1$, and $-v$ as \textit{blockades}, \textit{right particles}, and \textit{left particles}, respectively. We call ballistic annihilation with the measure $\nu(p,1/2,1)$ the \emph{symmetric case}. Suppressing the dependence on $\mu$, let 
	$$\theta:=\theta(p,\lambda,v) = \P(\text{the origin is never visited by a moving particle}) = (1- \vec q) (1- \cev q)$$
	with
	\begin{align}
		\cev{q} &=  \P(\text{the origin is ever visited by a left particle in the half-process on $[0,\infty)$}),\\
		\vec{q} &= \P(\text{the origin is ever visited by a right particle in the half-process on $(-\infty,0]$}).
		\label{eq:def_q}
	\end{align}  
	\noindent Ergodicity ensures that a positive density of blockades are never annihilated whenever $\theta>0$. Accordingly, we say the system \textit{fixates} if $\theta(p,\lambda,v )>0$ and \textit{fluctuates} otherwise. It is natural to ask what values of $p$ result in fixation. Define
	\begin{align}
		p_c^-(\lambda,v) = \inf \{ p \colon \theta(p,\lambda,v ) > 0 \} \text { and } p_c^+(\lambda,v ) = \sup \{ p \colon \theta(p,\lambda,v ) =0 .\} \label{eq:pc+}
	\end{align}

	\begin{figure}[h]
		\centering
		\hspace{0cm}
		\includegraphics[width = 1 \linewidth]{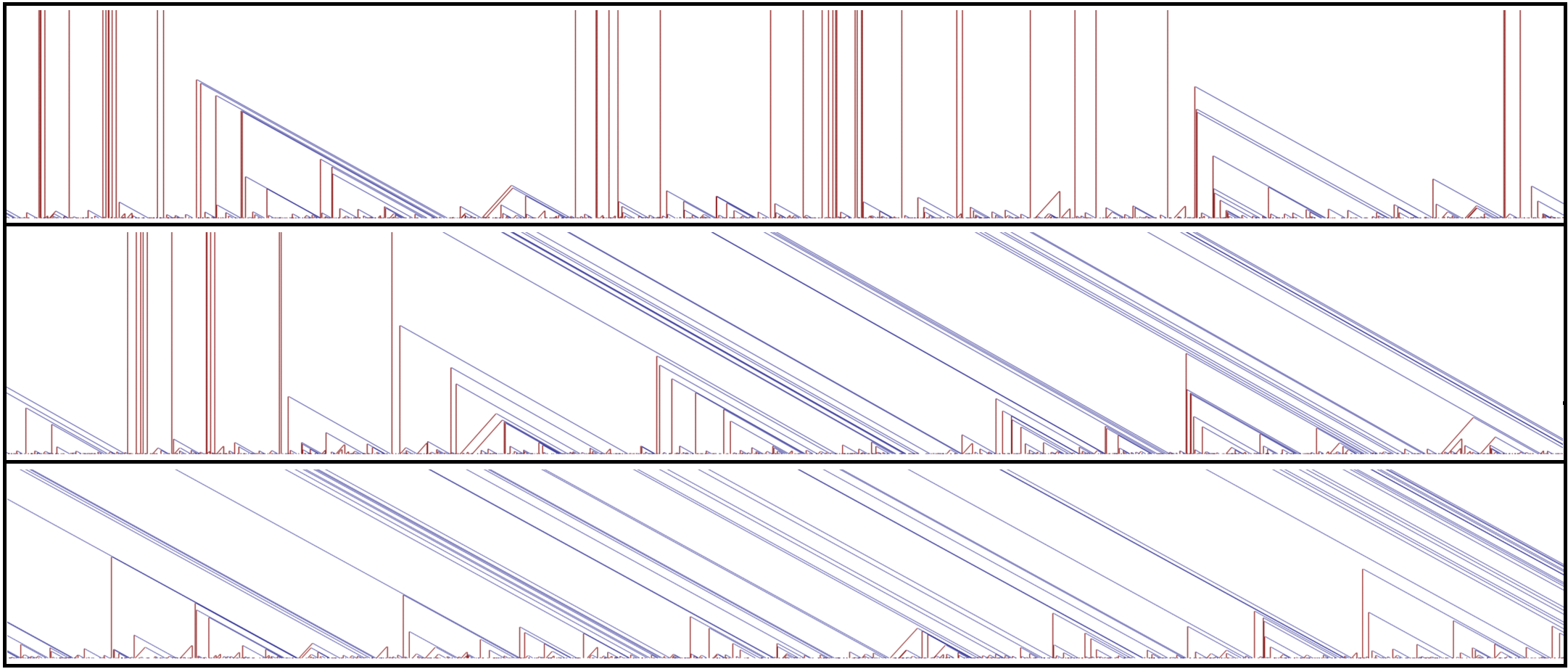}
		\vspace{-0.2cm}
		\caption{BA with spacing distribution $\mu=\text{Exp}(1)$ and 2000 particles for parameters $(\lambda,v)=(1/4,2)$ and $p=4/9$ (top), $p=3/7$ (middle), and $p=1/3$ (bottom). }
		\label{fig:simulation}
	\end{figure}

	A challenging aspect in the rigorous study of BA is that the manner in which collisions occur may become dramatically different with even a slight perturbation of the parameters.  For example, there is no known coupling that proves $\theta(v, p, \lambda)$ is monotonic in $p$. Thus, the {upper and lower} critical values in \eqref{eq:pc+} may not coincide. 
	
	In the early 1990s, physicists were interested in proving  that $p_c^-(1/2,1) >0$ for atomless $\mu$ and, even better, confirming a simple heuristic that $p_c^-(1/2,1) = 1/4 = p_c^+(1/2,1)$ \cite{b2}. This was addressed to the satisfaction of the physics community by Droz et al.\ in \cite{b4} and Krapivsky and Redner in \cite{b8}. Recently, mathematicians rediscovered this question and sought to replace derivations from \cite{b4, b8} with a rigorous probabilistic argument. Some progress towards upper bounds was made in \cite{bullets,ST,BGJ}, but a positive lower bound on $p_c^-(1/2,1)$ and a sharp upper bound remained elusive. Several major breakthroughs came in a series of papers (ultimately combined into one) from Haslegrave, Sidoravicius, and Tournier \cite{HST}. They proved that $p_c(1/2,1) = 1/4$ for all atomless $\mu$. Additionally, they confirmed the intriguing formulas from \cite{b8} for the density decay of different velocity particles as $p$ is varied.

	The situation for symmetric and atomic $\mu$ is rather different than what occurs in the atomless case. With unit spacings ($\mu=\delta_{1}$), it is possible that three particles collide simultaneously, in which case all three particles are removed from the system. Burdinski, Gupta, and Junge conjectured in \cite{BGJ} that the presence of triple collisions lowers the critical threshold. This was confirmed rigorously in \cite[Corollary 20]{HST} by proving $.2354 <p_c^- \leq p_c^+ < .2406$. The existence and location of the transition remain unknown. Note that Haslegrave, Sidoravicius, and Tournier showed that $p_c = 1/4$ in the symmetric setting where triple collisions result in the destruction of the blockade and either of the left or right particles chosen independently and uniformly at random \cite{HST}.

	The arguments in \cite{HST} for the symmetric case rely on the feature that reversing a configuration on a finite interval preserves the probability of collision events. Such symmetry under configuration reversal is lost in the general case. Our main contribution is showing that translation invariance of the process is all that is needed to perform a similar analysis. We accomplish this via a mass transport principle (Proposition~\ref{prop:MTP}). {The idea of using the mass transport principle to analyze BA} has found more applications. Haslegrave and Tournier used a mass transport principle to obtain a recursive formula for the Laplace transform of the location of the first particle to reach the origin \cite[Corollary 3]{HT}. 
	More recently, Benitez, Junge, {Lyu}, Redman, and Reeves relied heavily on the mass transport principle to study the phase transition in a coalescing version of ballistic annihilation in which collisions sometimes generate new particles \cite{CBA}. The mass transport perspective could be useful for analyzing more general particle systems. It has recently been applied to diffusion-limited annihilating systems, parking processes, and {cyclic particle systems} \cite{CRS, parking, damron2020stretched, foxall2018clustering}.

	Not much was known rigorously about the asymmetric case. Droz et al.\ \cite[Sec. IV]{b4}  provided a formula for the particle density for arbitrary velocities and weights. Getting information from the formula has a lot of implied numerical work that takes place over several steps. This makes it unclear if their solution could give a meaningful conjecture for anything but the symmetric case. Krapivsky and Redner \cite[Eq. (10)]{b8} discussed the case of three asymmetric velocities in the mean-field setting. However, they were unable to solve the corresponding system of differential equations, nor were they able to infer the location of the phase transition.
	Broutin and Marckert proved some invariance properties for a one-sided version of ballistic annihilation with finitely many particles called the \emph{bullet process} in \cite{bullets2}. Haslegrave and Tournier recently proved an independence result for asymmetric ballistic annihilation restricted to the first $n$ particles in $(0,\infty$) \cite{HT}. Namely, when $\mu$ has a gamma distribution, the distance of the $n$th particle from the origin is independent of the velocities and collision pairings for the $n$ particles in the finite system.


	\subsection{Statements of results}
	
	For each $x\in \mathbb{R}$, define random a variable $\vec{\tau}$ to be the first nonzero time at which $0$ is occupied by a right particle in the process restricted to the half-line $(- \infty , 0]$. Define $\cev{\tau}$ similarly for the process restricted to $[0, \infty)$. Define the following probabilities:
	\begin{align}\label{eq:def_alpha} 
		\vec{\alpha} &= \P(\vec{\tau} \leq  \cev{\tau}\mid \vec{\tau},\cev{\tau}<\infty ),\quad \cev{\alpha} = \P(\vec{\tau} \geq \cev{\tau}\mid \vec{\tau},\cev{\tau}<\infty ), \quad \hat{\alpha} = \P(\vec{\tau} = \cev{\tau}\mid \vec{\tau},\cev{\tau}<\infty ).
	\end{align}
	In words, $\vec \alpha$ is the probability that the origin is first visited from the left conditioned that it is eventually visited in each independent one-sided processes. {We make} a few remarks regarding these quantities. First, we have the identity $\vec{\alpha}+\cev{\alpha} = 1 + \hat \alpha.$
	Also $\vec \tau$ and the above probabilities depend on  $\mu,p,\lambda$, and $v$, but we suppress this dependence from the notation. Lastly, note that $\hat{\alpha}=0$ when $\mu$ is atomless. 
	
	%

	{Our first result is a nontrivial functional inequality that implies fluctuation.}
	
	\begin{theorem} \label{thm:main1} 
		If
		\begin{align}
			p\leq F(p,\lambda,v):= \max\left\{ \frac{\lambda(2+\hat{\alpha})-\vec{\alpha}}{\lambda(2+\hat{\alpha}) + 1+\cev{\alpha}-\vec{\alpha}}, \frac{(1-\lambda)(2+\hat{\alpha})-\cev{\alpha}}{(1-\lambda)(2+\hat{\alpha}) + 1+\cev{\alpha}-\vec{\alpha}} \right\},
			\label{eq:F}
		\end{align}
		then $\theta(p,\lambda,v)=0$.
	\end{theorem}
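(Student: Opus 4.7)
The plan is to argue by contraposition: assume $\theta(p,\lambda)>0$, which via the factorization $\theta=(1-\vec q)(1-\cev q)$ forces $\vec q<1$ and $\cev q<1$, and derive $p>F(p,\lambda)$. Since $F$ is the maximum of two symmetric fractions, two parallel balance arguments (one using right particles as the ``probe,'' the other using left particles) should each produce one of the fractions as a lower bound on $p$ under fixation.

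The first step is to translate the half-process quantities $\vec\alpha,\cev\alpha,\hat\alpha$ into the statistics of the translation-invariant full process. At the Palm distribution of a typical blockade, the probabilities of being destroyed by a right from the left (non-triple), by a left from the right (non-triple), or in a triple collision decompose according to the conditional orderings of the first-visit times in the two independent half-processes. Combined with $\theta=(1-\vec q)(1-\cev q)$, this should yield the closed forms $d_0^+=\vec q(1-\cev q\,\cev\alpha)$, $d_0^-=\cev q(1-\vec q\,\vec\alpha)$, $t_0=\vec q\,\cev q\,\hat\alpha$, with survival probability $s_0=(1-\vec q)(1-\cev q)$ as a sanity check.

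Next I would apply the mass transport principle to the random translation-invariant annihilation matching. For each ordered pair of types $(s,t)$, MTP yields $p_s\,d_{s\to t}^R=p_t\,d_{t\to s}^L$, together with the triple identities $p\,t_0=\lambda(1-p)\,t_+=(1-\lambda)(1-p)\,t_-$, where $d_{s\to t}^R$ is the probability a typical type-$s$ particle is annihilated with a type-$t$ partner on its right. Combined with the Palm expressions above, these identities pin down all blockade-involving contributions to the destruction breakdowns of typical right and left particles. The further ingredients are the nonnegativity of the right-left annihilation density $\alpha_{RL}\geq 0$ and the survival probabilities $s_+,s_-\geq 0$.

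Writing the balance $\alpha_{BR}+\alpha_{RL}+\alpha_T=\lambda(1-p)(1-s_+)$ and its mirror, substituting the formulas from the previous step, simplifying using $\vec\alpha+\cev\alpha=1+\hat\alpha$, and using that fixation forces $\vec q,\cev q<1$ (so the bounds tighten as $\vec q,\cev q\to 1^-$), one obtains after algebra
\[
p\bigl(1+\cev\alpha-\vec\alpha\bigr)\;>\;(1-p)\bigl[\lambda(2+\hat\alpha)-\vec\alpha\bigr],
\]
i.e., $p>F_1(p,\lambda)$; the mirror argument ($\lambda\leftrightarrow 1-\lambda$, $\vec\alpha\leftrightarrow\cev\alpha$) gives $p>F_2(p,\lambda)$, so $p>\max\{F_1,F_2\}=F(p,\lambda)$. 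The main obstacle is rigorously justifying the Palm-distribution decomposition of $d_0^+,d_0^-,t_0$, since $\vec\alpha,\cev\alpha,\hat\alpha$ are defined for independent copies of the two half-processes rather than at a typical blockade in the coupled full process; this rests on the same coupling that underlies the identity $\theta=(1-\vec q)(1-\cev q)$, and careful bookkeeping is needed to handle triple collisions when $\mu$ has atoms.
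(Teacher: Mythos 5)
Your setup is sound: arguing the contrapositive, noting that $\theta>0$ forces $\vec q,\cev q<1$ and (by ergodicity) $\vec\beta=\cev\beta=0$, and your Palm/MTP identities are correct --- e.g.\ $d_0^+=\vec q(1-\cev q\,\cev\alpha)$ together with the triple term reproduces the paper's $\P(\R\rightarrow\B_{1})=p\vec q\cev q\vec\alpha+p\vec q(1-\cev q)$. The gap is in the closing step: destruction-balance equations plus nonnegativity cannot yield a \emph{lower} bound on $p$. Under $s_+=0$, dropping the nonnegative term $\alpha_{RL}$ from your balance gives
\begin{align}
\lambda(1-p)\;\ge\;p\vec q(1-\cev q)+p\vec q\cev q\,\vec\alpha,
\end{align}
and similarly for the mirror; both have the form $p\cdot(\text{nonnegative})\le(\text{density of movers})$ and so only bound $p$ from \emph{above}. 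The one way to eliminate the unknown $\alpha_{RL}$ exactly --- subtracting the two balances using your MTP identity for right--left annihilations --- recovers precisely the paper's single identity \eqref{eq:third}, one relation in the two unknowns $\vec q,\cev q$, which does not determine $p>F$. Consistently, your displayed conclusion $p(1+\cev\alpha-\vec\alpha)>(1-p)\bigl[\lambda(2+\hat\alpha)-\vec\alpha\bigr]$ is not equivalent to $p>F_1$: the correct rearrangement is $p(1+\cev\alpha-\vec\alpha)>(1-p)\lambda(2+\hat\alpha)-\vec\alpha$, with no factor $(1-p)$ on $\vec\alpha$.

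The missing ingredient is a second, structurally different family of identities: recursions for $\cev q$ and $\vec q$ themselves, obtained by conditioning on the velocity of the first particle of the half-process and using an MTP computation for the \emph{joint} event that $\R_{1}$ hits a blockade whose site is subsequently reached by a left particle (the paper's Proposition \ref{prop2}, feeding into \eqref{eq:first}--\eqref{eq:second}). These equations are cubic in $(\vec q,\cev q)$, not linear in collision densities, and when $\vec q,\cev q<1$ and $\vec\beta=\cev\beta=0$ they collapse to the closed system $p\vec q(1+\vec\alpha\cev q)=\lambda(1-p)$, $p\cev q(1+\cev\alpha\vec q)=(1-\lambda)(1-p)$. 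Eliminating one variable produces a convex quadratic that is negative at $0$, so the existence of a root in $(0,1)$ forces positivity at $1$ --- and that positivity is exactly $p>F_1$ (symmetrically, $p>F_2$). Without a substitute for these recursions and the root-location argument, your outline cannot be completed.
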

	
	We conjecture that fixation occurs whenever $p > F(p,\lambda,v)$. So, the phase transition is at what we believe to be the unique fixed point $p_c = F(p_c, \lambda, v)$. Proving this will likely require closed forms for $\vec \alpha$ and $\cev \alpha$. See Remark~\ref{rem:aa} for more discussion. The only setting for which a closed form is known is the symmetric setting with atomless $\mu$; it is immediately apparent that $\vec \alpha \equiv 1/2 \equiv \cev \alpha$ and so  $F(p,1/2,1) \equiv 1/4$. A regularity argument in \cite[Section 3.2]{HST} proves that the set $\{ p > 1/4\colon \vec \theta(p,1/2,1) > 0\}$ is both open and closed in $(1/4,1]$, and thus is equal to the whole interval. 
	
	We next turn to obtaining {universal} bounds on $p_c^-$ and $p_c^+$. First we describe an elementary bound. When there are almost surely no triple collisions and the probability of a given velocity being assigned exceeds $1/2$, a coupling to random walk guarantees that there will be infinitely many surviving particles with that velocity. Birkhoff's ergodic theorem (confer \cite[Proposition 3.2]{bullets} or \cite[Proposition 3.1]{ST}) ensures that there can be at most one particle type that survives with positive probability. Thus, $p>1/2$ implies $\theta(p,\lambda,v) >0$, and $\max\{\lambda(1-p),(1-\lambda)(1-p)\}> 1/2$ implies that $\theta(p,\lambda,v) =0$. This gives the bounds
	\begin{align}
		\max \left \{ \f{  2 \lambda-1}{2 \lambda}, \f{ 1- 2 \lambda}{2- 2 \lambda}\right\} \leq  p_c^-(\lambda,v) \leq  p_c^+(\lambda,v) \leq 1/2, \label{eq:elementary}
	\end{align}
	which are depicted on the left in Figure \ref{fig:phases}.

	\begin{figure}
		\begin{center}

		\begin{tikzpicture}
\begin{axis}[
            xmin=0, xmax=1, 
            ymin=0, ymax=1,
            xtick={0,1},
            ytick={1},
            xlabel=$\lambda$, ylabel=$p$,
            xlabel style={above},
             ylabel style={below},
            scale=.75
            ]
 
\addplot [name path = A1,
    domain = .0001:.4999,
    samples = 1000] {(1-2*x)/(2*(1-x)};
    
\addplot [name path = A2,
    domain = .5001:.9999,
    samples = 1000] {(2*x-1)/(2*x)};
 
\addplot [name path = BM,
    domain = 0:1] {1/2};
    
\addplot [name path = BH,
    domain = 0:1] {1};

\addplot [name path = BL,
    domain = 0:1] {0};
    
\addplot [gray!100] fill between [of = BM and BH, soft clip={domain=0:1}];

\addplot [gray!50] fill between [of = A1 and BL, soft clip={domain=.0001:.4999}];

\addplot [gray!50] fill between [of = A2 and BL, soft clip={domain=.5001:.9999}];


\draw[gray!100, very thick] (1/2,1/2) -- (1/2,1/4);
\draw[gray!50, very thick] (1/2,1/4) -- (1/2,0);
 
\end{axis}
\end{tikzpicture}
\qquad 
		\begin{tikzpicture}
\begin{axis}[
            xmin=0, xmax=1, 
            ymin=0, ymax=1,
            xtick={0,1},
            ytick={1},
            scale=.75
            ]

\node[left] at (-.2,1/6) {$1/5$};
\node[left] at (-.2,1/4) {$1/4$};

\addplot [name path = A1,
    domain = .0001:3/8,
    samples = 1000] {(1-2*x)/(2*(1-x)};
    
\addplot [name path = A3,
    domain = 3/8:5/8,
    samples = 1000] {1/5)};
    
\addplot [name path = A2,
    domain = 5/8:.9999,
    samples = 1000] {(2*x-1)/(2*x)};
 
\addplot [name path = BM,
    domain = 0:1] {max(x/(1+x), (1-x)/(2-x))};

\addplot [name path = BH,
    domain = 0:1] {1};

\addplot [name path = BL,
    domain = 0:1] {0};
    
\addplot[dashed,domain = 1/3:2/3] {1/4};
    
\addplot [gray!100] fill between [of = BM and BH, soft clip={domain=0:1}];

\addplot [gray!50] fill between [of = A1 and BL, soft clip={domain=.0001:3/8}];

\addplot [gray!50] fill between [of = A3 and BL, soft clip={domain=3/8:5/8}];

\addplot [gray!50] fill between [of = A2 and BL, soft clip={domain=5/8:.9999}];

\draw[gray!50, very thick] (3/8,1/5) -- (3/8,0);
\draw[gray!50, very thick] (5/8,1/5) -- (5/8,0);

\draw[gray!100,very thick] (1/2,1/3) -- (1/2,1/4);
\draw[gray!50, very thick] (1/2,1/4) -- (1/2,0);

\end{axis}
\end{tikzpicture}

			\caption{The left figure has the elementary bounds for fixation $\theta >0$ (dark gray) and fluctuation $\theta =0$ (light gray) regimes from \eqref{eq:elementary}. The phase behavior in white regions is unknown. The central line depicts the phase transition in the symmetric case at $p_c(1/2,1/2) =1/4$  from \cite{HST}. The right figure has the improved bounds from Theorem \ref{thm:main2}. When $\mu$ is atomless, the lower bound on fluctuation improves from the solid line at $p=1/5$ to the dashed line at $p=1/4$. 
			}\label{fig:phases}
		\end{center}
	\end{figure}
	
	{For the function $F$ defined in \eqref{eq:F}, we show that $F(p,\lambda, v) \geq 1 /(4+\hat \alpha)$ (see the proof of Theorem \ref{thm:main2}). Hence by Theorem \ref{thm:main1}, it follows that $p \le 1 /(4+\hat \alpha)$ implies fluctuation. This significantly improves the lower bound in \eqref{eq:elementary}, matching the critical value $p_{c}(1/2,1)=1/2$ in the totally symmetric case with atomless $\mu$.} Using ideas from \cite{HST} we also derive a universal upper bound. This tells us that any transitions between fluctuation and fixation must occur in the middle region of the phase diagram depicted in Figure \ref{fig:phases}. 
	{The precise universal bounds are given in Theorem \ref{thm:main2}.}
	
	\begin{theorem} 
		\label{thm:main2} Define the functions 
		\begin{align}\label{eq:def_pc_bounds}
			f_{*}^{(1)}(\lambda) &=  \max \left( \frac{1-2\lambda}{2-2\lambda},\, \frac{1}{5}, \,\frac{2\lambda-1}{2\lambda}   \right),\; f_{*}^{(2)}(\lambda) = \max\left( \frac{1-2\lambda}{2-2\lambda},\, \frac{1}{4}, \,\frac{2\lambda-1}{2\lambda}   \right), \,  f^{*}(\lambda) = \max\left( \f{1-\lambda}{2-\lambda}, \f{\lambda}{1+\lambda} \right).
		\end{align} 
		For all $v>0$ it holds that
		\begin{align}
			f_{*}^{(1)}(\lambda)\le p_{c}^-(\lambda,v)\le p_c^+(\lambda,v) \leq  f^{*}(\lambda). 
		\end{align} 
		Furthermore, if $\mu$ is atomless, then $f_{*}^{(2)}(\lambda,v)\le p_{c}^-(\lambda,v)$. In particular, $\theta(p,\lambda,v)=0$ whenever $p\le 1/5$ for all $\lambda$, $v$, and $\mu$; this holds whenever $p\le 1/4$ if $\mu$ is atomless. 
	\end{theorem}

	As mentioned earlier, it is generally difficult to prove regularity properties such as monotonicity and continuity for probabilities associated with BA. 	In the symmetric case,  $\cev{\beta}=\vec{\beta}=0$ and $\cev{\alpha}=\vec{\alpha}=1/2$ for all $p$, so continuity of $\theta$ in $p$ is the main interest. In \cite{HST}, the authors deduced continuity of $\theta$ as a consequence of an explicit (continuous) formula in $p$. However,  the number of unknown quantities in the asymmetric case exceeds the number of equations that relate these quantities. {Indeed, our main recursions for the general case (see \eqref{eq:first}--\eqref{eq:third}) relate six quantities, whereas the recursion for the totally symmetric case in \cite{HST} (see \eqref{eq:HST}) involves only one. }

	Our final result establishes the continuity of various quantities including the survival probabilities of each particle type. Define the following survival probabilities of moving particles:
	\begin{align}
		\vec{\beta}&=\P(\textup{a particle is right-moving and is never annihilated}), \\
		\cev{\beta}&=\P(\textup{a particle is left-moving and is never annihilated}).
	\end{align}
	
	\begin{theorem} \label{thm:continuity}
		For fixed $\mu$ and $v>0$, it holds that $\theta$, $\cev{\beta}$,  $\vec{\beta}$, and the product $\cev{q}\vec{q}\hat{\alpha}$ are continuous in $p$ and $\lambda$. {Furthermore, in the fixation regime $\{ (p,\lambda)\,|\, \theta>0 \}$, all quantities $\cev{q},\vec{q},\vec{q}, \cev{\alpha}, \vec{\alpha}$, and $\hat{\alpha}$ are continuous in $p$ and $\lambda$.}
	\end{theorem}

	It may not be true in general that $\theta$ is continuous in $v$. Consider the situation that $\mu = \delta_1$ so that particles are initially placed on the integers. If $v$ is irrational, then there are no triple collisions. It follows that $\hat \alpha = 0$ for irrational $v$.  However, if $v$ is rational, then there is a positive probability that two particles arrive simultaneously to the origin. Thus, $\hat \alpha$ is discontinuous at all values of $v$. While this does not prove that $\theta$ is discontinuous, it suggests that the function $F$ from \eqref{eq:F} is also discontinuous in $v$. Since we believe that $p_c$, assuming it exists, solves $p_c = F(p_c,\lambda,v)$, it seems possible that both $p_c$ and $\theta$ are discontinuous in $v$ for this case.

	\subsection{Notation} \label{sec:notation}

	{An \textit{initial configuration} is a placement of particles on $\mathbb{R}$ together with their velocities.} For any collision event $E$ and an interval $I\subseteq \mathbb{R}$, denote by $E_{I}$ the event that $E$ occurs in the process after restricting to only the particles in the initial configuration on $I$. 
	
    Using translation invariance of the process on $\mathbb{R}$, we may assume that the origin is always occupied by a particle. For each $i\in \mathbb{N}$, we denote the $i$th particle on the positive (resp., negative) real line by $\bullet_{i}$ (resp., $\bullet_{-i}$). Denote the event that the $i$th particle is a blockade, right particle, and left particle by $\{ \B_{i} \}$, $\{ \R_{i}  \}$, and $\{ \L_{i} \}$, respectively. We write $x_{i}\in \mathbb{R}$ for the initial location of $\bullet_i$. For any $i,j\in \mathbb{Z}$ and $x\in \mathbb{R}$, denote 
	\begin{align}
		\{ \R_{i}\rightarrow \B_{j} \} &= \{ \R_{i} \}\cap \{ \B_{j} \} \cap \{ \text{the $i$th particle annihilates with the $j$th particle} \}\\
		\{ \R_{i}\rightarrow \B \} &= \{ \R_{i} \}\cap \{ \text{the $i$th particle annihilates with a blockade} \} \\
	    \{ \R_{i}\leftrightarrow \L_j \} &= \{ \R_{i} \}\cap \{\L_j\} \cap \{ \text{the $i$th particle annihilates with the $j$th particle} \} \\
        \{ \R_{i}\leftrightarrow \L \} &= \{ \R_{i} \} \cap \{ \text{the $i$th particle annihilates with a left particle} \} \\
		\{ \R_i \rightarrow \B \leftarrow \L_j\} & = \{\text{there is a triple collision between $\R_i$, a blockade, and $\L_j$}\}\\
		\{ x \leftarrow \L \}  &= \{ \text{site $x$ is visited by a left particle after time 0} \}\\
		\{ x \leftarrow_{1} \L_{n} \}  &= \{ \text{the $n$th left particle is the first left particle that visits site $x$ after time 0} \}
	\end{align}
	and we use the similar notation for other types of collisions and visits. The quantities $\vec{q}$, $\cev{q}$, $\vec \alpha, \cev \alpha,$ and  $\hat \alpha$ are defined in \eqref{eq:def_q} and \eqref{eq:def_alpha}. Define the survival probabilities for moving particles by $\vec{\beta} = \P(\R_{1}\nrightarrow \bullet)$ and $\cev{\beta} = \P(\bullet \nleftarrow \L_{1})$, which were also defined above Theorem \ref{thm:continuity}. 
	
	For each interval $I\subseteq \mathbb{R}$, denote the process restricted on $I$ by $\textbf{BA}[I]$. For each $a,b\in \mathbb{Z}$ with $a\le b$, we define $\vec{N}(a,b)$, $W(a,b)$, and $\dot{N}(a,b)$ to be the number of surviving right particles, left particles, and blockades in $\textbf{BA}[x_{a},x_{b}]$. We also define the random variables
	\begin{align}\label{eq:W_def_1}
	 & W(a,b) = \dot{N}(a,b) -  \cev{N}(a,b) - \vec{N}(a,b), \\
	 &\cev W(a,b) = \dot N(a,b) - \cev N(a,b), \qquad  \vec W(a,b) = \dot N(a,b) - \vec N(a,b)
	\end{align}
	that track the net difference between blockades and moving particles surviving from $\textbf{BA}[x_{a},x_{b}]$.


	
	\subsection{Overview of proofs}
	The key to proving Theorem \ref{thm:main1} is Lemma \ref{lemma:main_recursion}, which gives algebraic equations relating the main probabilities associated to BA. The proof uses a mass transport principle to generalize the approach in \cite[Proposition 5]{HST}. The equations are further manipulated in Proposition \ref{prop:trichotomy} to obtain recursive formulas for $\vec q$ and $\cev q$. This leads to the criterion for fluctuation in Theorem \ref{thm:main1}. It is noteworthy that our methods, which average over the longtime behavior of all particles in the system, are not sensitive to the value of $v$. Any dependence between $v$ and the phase behavior is contained implicitly in the quantities $\vec \alpha, \cev \alpha, \hat \alpha$, $\vec \beta$, and $\cev \beta$.

	The proof of Theorem \ref{thm:main2} uses Theorem \ref{thm:main1} as well as Lemma \ref{lem:vecW}, which is an adaptation of \cite[Proposition 11]{HST}. The lemma provides a necessary and sufficient condition for fixation to occur in terms of $\cev W$ and $\vec{W}$ from \eqref{eq:W_def_1}:
	\begin{align}\label{eq:HSTtheta}
        \theta>0 \quad \text{ if and only if } \quad \exists k,\ell \,\, s.t. \,\, \text{$\E[ \cev{W}(1,k) ]>0$ and  $\E[\vec{W}(-\ell, -1)]>0$}.
    \end{align}
	The argument in \cite{HST} relies on symmetry to deduce that left and right particles almost surely annihilate for all $p$. The asymmetric setting lacks this feature. To overcome this, we show that for all parameter choices, either [$\vec{\beta}=0$ and $\vec{q}\le \cev{q}$] or [$\cev{\beta}=0$ and $\vec{q}\ge \cev{q}$] hold (see Proposition \ref{lemma:alpha_half_implications}). This dichotomy allows us to apply similar reasoning as in \cite{HST}.

	To deduce continuity of $\theta$ in Theorem~\ref{thm:continuity}, we sharpen \eqref{eq:HSTtheta} to a quantitative characterization of $\theta$ in terms of the expected value of $W(1,k)$ from \eqref{eq:W_def_1}. Specifically, Lemma \ref{lemma:theta} states that
	\begin{align} \theta = 0 \vee  \textstyle \sup_{k\ge 1}\, k^{-1} \E[W(1,k)] . \label{eq:theta_e}
	\end{align} 
	Notice that instead of $\cev W$ and $\vec{W}$, we work with $W$, which subtracts the number of left \emph{and} right surviving particles from the total number of blockades that survive in ballistic annihilation restricted to the particles in $[x_1, x_k]$.  
	
	 The advantage of using $W$ is that a more general superadditive property holds when combining its values on adjacent intervals (see Proposition~\ref{prop:superadditivity_Z}). This was first observed in \cite{CBA} for coalescing ballistic annihilation. The benefit of working directly with $W$ is that we can compare the event of blockade survival to a random walk whose steps are distributed as $W(1,k)$ for a deterministic $k$ rather than some random $K$ as employed in \cite{HST}. This allows us  to obtain the quantitative formula for $\theta$ at \eqref{eq:theta_e}, which is essential to our argument for proving its continuity. We elaborate a bit more on this difference in Remark \ref{rem:theta}. 
	 
	Finally, to prove Theorem \ref{thm:continuity}, we first derive continuity of $\theta$ from Lemma \ref{lemma:theta} and then show that $\cev{\beta}-\vec{\beta}$ is continuous by a mass transport principle (see Proposition \ref{prop:MTP_continuity}). We then use the classical pasting lemma (Lemma \ref{lemma:pasting}) to show $\cev{\beta}$ and $\vec{\beta}$ are continuous. The second part of the statement is deduced from the first part by using $\theta=(1-\cev{q})(1-\vec{q})$ and lower semi-continuity of the quantities $\cev{q},\vec{q}, \cev{q}\vec{q}\cev{\alpha},$ and  $\cev{q}\vec{q}\vec{\alpha}$ (see Proposition \ref{prop:LSC}).
	


	\subsection{Organization} 
	In Section \ref{section:recursion} we prove Lemma \ref{lemma:main_recursion} using the mass transport principle and use it to deduce Theorem \ref{thm:main1}. In Section \ref{sec:theta_lemma}, we prove lemmas that relate blockade survival to various finite conditions, and also derive Theorem \ref{thm:main2}. Lastly,  Section \ref{sec:continuity} gives the proof of Theorem \ref{thm:continuity}.
	
	\section{The mass transport principle and proof of Theorem \ref{thm:main1}} \label{section:recursion}
	
	In this section we prove the following lemma and then use it to deduce Theorem \ref{thm:main1}.
	\begin{lemma}\label{lemma:main_recursion}
		For arbitrary spacing distributions and parameter choices, the following equations hold:
		\begin{align}
			\hspace{0.59cm}(\cev{q}-1)( p\cev \alpha  \vec{q} \cev{q} +p\cev{q} - (1-\lambda)(1-p) ) &= \cev{q}\vec{\beta}  \label{eq:first}\\
			\hspace{1.38cm}(\vec{q}-1)( p \vec \alpha \vec{q} \cev{q} + p\vec{q} - \lambda(1-p) ) &= \vec{q}\cev{\beta} \label{eq:second} \\
			p\vec{q}\cev{q}(\cev{\alpha}-\vec{\alpha}) + p(\cev{q}-\vec{q}) -(1-2\lambda)(1-p)  &=  \vec{\beta} - \cev{\beta}. \label{eq:third}
		\end{align}
	\end{lemma}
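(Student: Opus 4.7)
The plan is to derive the three equations by combining the mass transport principle on the particle index $\mathbb{Z}$ with the independence of the two half processes on either side of a blockade at the origin. Two preliminary identities drive the argument. First, translation invariance applied to the kernel $m(i,j) = \mathbf{1}\{\R_i \to X_j\}$ for $X \in \{\L, \B\}$ yields the balance identities $\P(\R_0 \to X) = \P(X_0 \leftarrow \R)$, and the analogues with $\L$ in place of $\R$. Second, on the event that $\bullet_0$ is a blockade, the processes on $(-\infty, 0]$ and $[0, \infty)$ are conditionally independent, so $\vec\tau$ and $\cev\tau$ are independent; parsing the definitions of $\vec\alpha, \cev\alpha, \hat\alpha$ and using $\vec\alpha + \cev\alpha = 1 + \hat\alpha$ gives
\begin{equation}
\P(\B_0 \leftarrow \R) = p\vec{q}(1-\cev{\alpha}\cev{q}), \qquad \P(\B_0 \to \L) = p\cev{q}(1-\vec{\alpha}\vec{q}).
\end{equation}

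Equation \eqref{eq:third} then follows by bookkeeping. The fate of a right particle at $\bullet_0$ partitions as annihilation with a blockade, annihilation with a left particle, or survival, giving $\lambda(1-p) = \P(\R_0 \to \B) + \P(\R_0 \to \L) + \vec\beta$, and analogously $(1-\lambda)(1-p) = \P(\L_0 \to \B) + \P(\L_0 \to \R) + \cev\beta$. Subtracting the two decompositions, using the first balance identity to cancel the $\R$--$\L$ cross-annihilation terms, and substituting the closed forms above yields \eqref{eq:third} after rearrangement.

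For equations \eqref{eq:first} and \eqref{eq:second}, which constitute the substantive part and extend \cite[Proposition 5]{HST} to the asymmetric setting, I focus on \eqref{eq:first}; equation \eqref{eq:second} follows by the mirror argument. The strategy is a renewal-style decomposition of $\cev q$ obtained by conditioning on the type of $\bullet_1$ in the half process on $[0, \infty)$. When $\bullet_1$ is left, the origin is visited automatically. When $\bullet_1$ is a blockade or a right particle, the origin is visited only after $\bullet_1$ is first destroyed by an arrival from the right; the contribution of each case is captured by a mass transport sending unit weight from the destroyed particle to the downstream left-visitor of the origin, with weights encoded by the preliminary identities. Collecting terms and using $\vec\alpha + \cev\alpha = 1 + \hat\alpha$ recombines to \eqref{eq:first}; the factor $\cev q \vec\beta$ on the right arises because a right particle at the origin in the full process survives precisely when no left-visitor of the origin exists in the (independent) right half process, coupled by translation invariance.

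The main obstacle is the renewal step when $\bullet_1$ is destroyed inside $[0,\infty)$: the process on the particles remaining in $(x_1, \infty)$ after the destruction event is \emph{not} a fresh half process, because intermediate particles have already interacted during that event. In the symmetric case, \cite{HST} bypass this by a configuration-reversal invariance that fails under asymmetric $\nu$. The replacement is precisely the $\mathbb{Z}$-indexed mass transport described above, which relies only on translation invariance of the initial configuration and not on any reversal symmetry; this is what allows the asymmetric probabilities $\vec\alpha, \cev\alpha$ and the atomic contribution $\hat\alpha$ to enter \eqref{eq:first} and \eqref{eq:second} in a coordinated way.
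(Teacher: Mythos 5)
Your overall architecture coincides with the paper's: mass-transport balance identities from translation invariance, a decomposition of $\cev q$ by the type of $\bullet_1$, and a partition on the fate of $\R_1$; your derivation of \eqref{eq:third} is essentially the paper's (subtract the two fate-decompositions and cancel the cross term via $\P(\R_1\rightarrow\L)=\P(\R\leftarrow\L_{-1})$). But for \eqref{eq:first} and \eqref{eq:second} there is a genuine gap exactly where the real work lives. The step you describe as ``a mass transport sending unit weight from the destroyed particle to the downstream left-visitor of the origin'' is precisely the paper's Proposition \ref{prop2}, the identity $\P((0\leftarrow\L)_{(0,\infty)}\land(\R_1\rightarrow\B)_{(0,\infty)})=p\vec q\cev q\vec\alpha$, and it cannot be waved through: one must exhibit the kernel $Z(a,b)=\1\left((\R_a\rightarrow\B_b)_{[x_a,\infty)}\land(x_b\leftarrow\L)_{[x_b,\infty)}\right)$, verify that $\sum_b Z(1,b)$ is the indicator of the desired joint event (this needs the clearing argument that the first left particle reaching $x_b$ after $\R_1$ kills $\B_b$ continues unimpeded to $x_1$ and then to $0$), and verify that $\sum_b Z(b,1)$ reduces to $\{\B_1\}\cap\{\R\rightarrow x_1\}_{(-\infty,x_1]}\cap\{x_1\leftarrow\L\}_{[x_1,\infty)}\cap\{\vec{\tau}^{(x_1)}<\cev{\tau}^{(x_1)}\}$. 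Without this the coefficient $p\vec q\cev q\vec\alpha$ is unjustified. Relatedly, your case analysis for a right $\bullet_1$ is incomplete: $\R_1$ need not be ``destroyed by an arrival from the right'' --- it can instead catch a blockade ahead of it, and the two sub-cases contribute differently (the blockade case is governed by the joint identity above; the left-particle case contributes $\cev q\cdot\P((\R_1\rightarrow\L)_{[x_1,\infty)})$, and it is the substitution $\P((\R_1\rightarrow\L)_{[x_1,\infty)})=\lambda(1-p)-\P((\R_1\rightarrow\B)_{[x_1,\infty)})-\vec\beta$ that produces the $\cev q\vec\beta$ term, not the direct survival coupling you describe).

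A smaller but real issue: your preliminary formula $\P(\B_0\leftarrow\R)=p\vec q(1-\cev\alpha\cev q)$ counts only strict first arrivals, whereas the paper's $\P(\R\rightarrow\B_1)=p\vec q\cev q\vec\alpha+p\vec q(1-\cev q)$ includes ties, i.e., triple collisions; the two agree only when $\hat\alpha=0$ (atomless $\mu$), while the lemma is asserted for arbitrary spacing distributions. The discrepancy happens to cancel in the difference used for \eqref{eq:third}, but it does not cancel in the one-sided bookkeeping for \eqref{eq:first} and \eqref{eq:second}, so you must fix a convention for how a triple collision is attributed and carry it consistently through every partition before claiming that the terms ``recombine'' to the stated equations.
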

	
	We have seven (six if $\mu$ is atomless) unknown quantities that are related by the recursions in Lemma \ref{lemma:main_recursion}. It is not possible with Lemma \ref{lemma:main_recursion} alone to obtain closed form solutions. In contrast, for the symmetric case with an atomless spacing distribution the equations in Lemma \ref{lemma:main_recursion} reduce to \begin{align}\label{eq:HST}
		(q-1)( q^{2} +2q +1- p^{-1}) = 0,
	\end{align}
	where $q=\cev{q}=\vec{q}$. This yields the following dichotomy 
	\begin{align}
		q=1 \quad \text{or} \quad q=p^{-1/2}-1.  \label{eq:dichotomy}
	\end{align}
	Hence $p\le 1/4$ implies $q=1$. 
	An analogue of \eqref{eq:dichotomy} for the general case is given in Proposition \ref{prop:trichotomy}. 
	Nonetheless, Lemma \ref{lemma:main_recursion} plays a crucial role in establishing universal bounds in the phase diagram for the asymmetric BA.

	
	The key tool in proving Lemma \ref{lemma:main_recursion} is the following mass transport principle. 
	
	\begin{prop}[Mass transport principle]\label{prop:MTP}
		Define a non-negative random variable $Z(a,b)$ for integers $a,b\in \mathbb{Z}$ such that its distribution is diagonally invariant under translation, i.e., for any integer $d$,  $Z(a+d,b+d)$ has the same distribution as $Z(a,b)$. Then for each $a\in \mathbb{Z}$,
		\begin{equation}
			\E \left[ \sum_{b\in \mathbb{Z}} Z(a,b) \right] = \E \left[ \sum_{b\in \mathbb{Z}} Z(b,a)\right].
		\end{equation}
	\end{prop}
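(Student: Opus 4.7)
The plan is to prove the two sides are equal by computing each one explicitly via diagonal translation invariance, showing that both reduce to the same re-indexed sum $\sum_{k\in\Z}\E Z(0,k)$.

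First, since $Z(a,b)\ge 0$, Tonelli's theorem lets me interchange $\E$ with the sum over $b$, so
\begin{equation}
\E\sum_{b\in\Z}Z(a,b)=\sum_{b\in\Z}\E Z(a,b),\qquad \E\sum_{b\in\Z}Z(b,a)=\sum_{b\in\Z}\E Z(b,a),
\end{equation}
where the equalities hold in $[0,\infty]$. Next I would use the diagonal translation invariance hypothesis: for any integer $d$, $Z(a+d,b+d)\stackrel{d}{=}Z(a,b)$, and in particular $\E Z(a+d,b+d)=\E Z(a,b)$.

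Applying this with $d=-a$ to the first sum gives $\E Z(a,b)=\E Z(0,b-a)$, and applying it with $d=-b$ to the second sum gives $\E Z(b,a)=\E Z(0,a-b)$. Re-indexing the first sum with $k=b-a$ and the second with $k=a-b$ (both bijections $\Z\to\Z$) yields
\begin{equation}
\sum_{b\in\Z}\E Z(a,b)=\sum_{k\in\Z}\E Z(0,k)=\sum_{b\in\Z}\E Z(b,a),
\end{equation}
which is exactly the desired identity.

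There is no real obstacle here; the only subtlety is that either side could be infinite, but the equality still makes sense in $[0,\infty]$ because $Z$ is non-negative and Tonelli applies without any integrability assumption. I would include one short remark noting this so that the proposition can be invoked later even in cases where the relevant mass is not a priori known to be finite.
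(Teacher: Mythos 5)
Your proof is correct and follows essentially the same route as the paper: both arguments interchange $\E$ with the sum (the paper cites linearity, you more carefully invoke Tonelli for non-negative terms), apply diagonal translation invariance of $\E Z(a,b)$, and re-index a bijection of $\mathbb{Z}$. Your extra remark that the identity holds in $[0,\infty]$ without an integrability assumption is a harmless refinement of what the paper does implicitly.
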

	
	\begin{proof}
		Using  linearity of expectation and translation invariance of $\E[Z(a,b)]$, we get 
		\begin{equation}
			\E \left[ \sum_{b\in \mathbb{Z}} Z(a,b) \right] =  \sum_{b\in \mathbb{Z}} \E [Z(a,b)]  = \sum_{b\in \mathbb{Z}} \E [Z(2a-b,a)] = \sum_{b\in \mathbb{Z}} \E [Z(b,a)] = \E  \left[ \sum_{b\in \mathbb{Z}} Z(b,a) \right].
		\end{equation}  
	\end{proof}

	\begin{prop}\label{prop:p_pm}
		The following hold:
		\begin{align}
			\P(\R_{1}\rightarrow \bullet) &= p\vec{\alpha}\vec{q}\cev{q}+p\vec{q}(1-\cev{q}) + \P(\R_{1}\leftrightarrow \L) \label{eq:1}\\
			\P(\bullet \leftarrow \L_{-1}) &= p\cev{\alpha}\vec{q}\cev{q} + p(1-\vec{q})\cev{q}+\P(\R\leftrightarrow \L_{-1})\label{eq:2} \\
			\P(\R_{1}\leftrightarrow \L) & = \P(\R\leftrightarrow \L_{-1}). \label{eq:3} 
		\end{align} 
	\end{prop}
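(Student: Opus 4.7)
The three identities all follow from applying the mass transport principle (Proposition \ref{prop:MTP}) to suitable collision indicators, followed, for (1) and (2), by conditioning on a typical blockade and exploiting the key fact that a blockade is stationary and therefore decouples the process into two independent half-processes until the blockade is annihilated. Under this decoupling, the ``time from the left'' and the ``time from the right'' for a moving particle to reach the blockade are independent random variables distributed, by translation, as $\vec{\tau}$ and $\cev{\tau}$ from \eqref{eq:def_alpha}.

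\textbf{Equations (1) and (2).} Partition $\{\R_1\to\bullet\}=\{\R_1\to\B\}\sqcup\{\R_1\leftrightarrow\L\}$ (a right-moving $\bullet_1$ annihilates either with a blockade or with a left particle), so the second summand already matches the last term in (1). For $\P(\R_1\to\B)$, apply Proposition \ref{prop:MTP} to the diagonally translation invariant indicator $Z(a,b)=\mathbf{1}(\R_a\to\B_b)$. Since a right particle can only annihilate with a blockade lying to its right, $\sum_b Z(1,b)=\mathbf{1}(\R_1\to\B)$ and $\sum_b Z(b,1)=\mathbf{1}(\R\to\B_1)$, so the MTP yields $\P(\R_1\to\B)=\P(\R\to\B_1)$. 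Condition on $\B_1$ (probability $p$), and let $T_L$ and $T_R$ denote the times at which a right particle from the left half-process and a left particle from the right half-process first reach $x_1$. By the decoupling above, $T_L$ and $T_R$ are independent with marginals $\vec{\tau}$ and $\cev{\tau}$. The event $\R\to\B_1$ holds iff $T_L<\infty$ and $T_L\le T_R$; the nonstrict inequality ensures the simultaneous triple collision $T_L=T_R$ is counted, since such a collision annihilates $\bullet_1$ together with the incoming right particle. Splitting by whether $T_R$ is finite and using $\P(T_L\le T_R\mid T_L,T_R<\infty)=\vec{\alpha}$ gives $\P(\R\to\B_1\mid\B_1)=\vec{q}(1-\cev{q})+\vec{\alpha}\vec{q}\cev{q}$; multiplying by $p$ produces (1). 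Equation (2) is entirely parallel: apply Proposition \ref{prop:MTP} to $Z(a,b)=\mathbf{1}(\B_a\leftarrow\L_b)$ to reduce to $\P(\B_{-1}\leftarrow\L)$, condition on $\B_{-1}$, and observe that $\B_{-1}\leftarrow\L$ now corresponds to $T_R\le T_L$, which yields $p\cev{\alpha}\vec{q}\cev{q}+p\cev{q}(1-\vec{q})$.

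\textbf{Equation (3).} Apply Proposition \ref{prop:MTP} directly to the diagonally translation invariant indicator $Z(a,b)=\mathbf{1}(\R_a\leftrightarrow\L_b)$ to obtain $\P(\R_1\leftrightarrow\L)=\P(\R\leftrightarrow\L_1)$. Summing the diagonal invariance $\P(\R_a\leftrightarrow\L_b)=\P(\R_{a+d}\leftrightarrow\L_{b+d})$ over $a$ shows that $\P(\R\leftrightarrow\L_b)$ does not depend on $b$, so in particular $\P(\R\leftrightarrow\L_1)=\P(\R\leftrightarrow\L_{-1})$, completing (3).

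\textbf{Main obstacle.} The most delicate bookkeeping is for the triple-collision tie $T_L=T_R<\infty$, which is possible when $\mu$ has atoms. A triple collision removes the blockade together with both incoming moving particles, so the tie contributes to $\{\R\to\B_1\}$ and also to $\{\B_1\leftarrow\L\}$; the resulting double count is reconciled precisely by the identity $\vec{\alpha}+\cev{\alpha}=1+\hat{\alpha}$. Keeping this accounting consistent throughout, especially when these three identities are later combined in Lemma \ref{lemma:main_recursion}, is the real hazard of the argument.
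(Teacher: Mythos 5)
Your proof is correct and follows essentially the same route as the paper's: the mass transport principle applied to collision-pair indicators, combined with the decomposition at a blockade into two independent half-processes governed by $\vec{\tau}$ and $\cev{\tau}$ (including the same convention for the tie $\vec{\tau}=\cev{\tau}$ via the non-strict inequality in $\vec{\alpha}$). The only cosmetic difference is that you partition $\{\R_{1}\rightarrow \bullet\}$ by collision type before applying the MTP to $\mathbf{1}(\R_{a}\rightarrow \B_{b})$, whereas the paper applies it directly to $\mathbf{1}(\R_{a}\rightarrow \bullet_{b})$ and splits the target-side sum afterward.
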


	\begin{proof}
		For $a,b\in \mathbb{Z}$, define the indicator variable
		$Z(a,b) = \1\left( \{\R_{a}\rightarrow \bullet_{b}\} \right).$
		Note that 
		\begin{align}
			\E  \left[ \sum_{b\in \mathbb{Z}} Z(1,b) \right] = \P(\R_{1}\rightarrow \bullet).
		\end{align}
		Using translation invariance of BA,
		\begin{align}
			\E \left[ \sum_{b\in \mathbb{Z}} Z(b,1)\right] &= \P(\R \rightarrow \B_{1})  + \P(\R\leftrightarrow \L_{1} )
			= p\vec{q}\cev{q}\vec{\alpha} +p\vec{q}(1-\cev{q}) + \P(\R_{1}\leftrightarrow \L). 
		\end{align}
		The equality $\P(\R \rightarrow \B_{1}) = p\vec{q}\cev{q}\vec{\alpha} +p\vec{q}(1-\cev{q})$ follows by conditioning on whether $x_1$ is visited from both sides or not in the processes on $(-\infty,x_1]$ and $[x_1,\infty)$.
		Proposition \ref{prop:MTP} then yields \eqref{eq:1}. A similar argument shows \eqref{eq:2}. 
		
		For \eqref{eq:3}, we use the mass transport principle for the indicators 
		$$\tilde{Z}(a,b) = \1\left( \{\R_{a}\rightarrow \L_{b}\} \right).$$
		By translation invariance we have 
		\begin{align}
			\P(\R_{1}\leftrightarrow \L) = \E \left[ \sum_{b\in \mathbb{Z}} \tilde{Z}(1,b) \right]   = \E \left[ \sum_{a\in \mathbb{Z}} \tilde{Z}(a,1) \right] = \E \left[ \sum_{a\in \mathbb{Z}} \tilde{Z}(a,-1) \right]= \P(\R\leftrightarrow \L_{-1}).
		\end{align}
		This shows the assertion. 
	\end{proof}

	\begin{prop}\label{prop2}
		$\P( (0\leftarrow \L)_{(0,\infty)} \land (\R_{1} \rightarrow \B)_{(0,\infty)}) = p\vec{q}\cev{q}( \vec{\alpha} - \hat \alpha)  + p \vec q \cev q^2 \hat \alpha $.
	\end{prop}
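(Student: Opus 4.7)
The plan is to apply the mass transport principle (Proposition~\ref{prop:MTP}) with an indicator that couples the annihilation event $\R_a \to \B_b$ in the half-process on $[x_a, \infty)$ with the event that a left particle reaches $x_b$ in the disjoint half-process on $(x_b, \infty)$. For $a, b \in \mathbb{Z}$ set
\begin{equation}
Z(a, b) = \mathbf{1}\bigl((\R_a \rightarrow \B_b)_{[x_a, \infty)}\bigr) \cdot \mathbf{1}\bigl((x_b \leftarrow \L)_{(x_b, \infty)}\bigr).
\end{equation}
By translation invariance $Z$ is diagonally invariant in distribution, so $\E \sum_b Z(1, b) = \E \sum_b Z(b, 1)$.

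To evaluate $\sum_b Z(1, b)$: the events $(\R_1 \rightarrow \B_b)_{[x_1, \infty)}$ are mutually exclusive in $b$ since $\R_1$ annihilates at most one blockade in $\textbf{BA}[x_1, \infty)$. On the event $(\R_1 \rightarrow \B_k)_{[x_1, \infty)}$, I would argue that particles in $[x_1, x_k]$ must annihilate among themselves (and with $\R_1, \B_k$) by time $x_k - x_1$ without any interaction with particles in $(x_k, \infty)$, since any such interaction would kill either $\R_1$ or $\B_k$ prematurely. Consequently $\textbf{BA}[(x_k, \infty)]$ evolves in isolation, and any left particle reaching $x_k$ in it passes unobstructed through the now-empty interval $(0, x_k)$ to the origin. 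This yields $(0 \leftarrow \L)_{(0, \infty)} \Leftrightarrow (x_k \leftarrow \L)_{(x_k, \infty)}$ on the event, and summing over $k$ gives $\E \sum_b Z(1, b) = \P\bigl((0 \leftarrow \L)_{(0, \infty)} \cap (\R_1 \rightarrow \B)_{(0, \infty)}\bigr)$, the LHS.

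To evaluate $\sum_b Z(b, 1)$: I would first establish the decomposition
\begin{equation}
\{\R_a \rightarrow \B_1\}_{[x_a, \infty)} = \{\R_a \rightarrow \B_1\}_{(-\infty, x_1]} \cap \{\vec\tau \le \cev\tau\},
\end{equation}
where $\vec\tau, \cev\tau$ denote the hitting times at $x_1$ in the independent left and right half-processes about $x_1$. The equivalence captures that $\R_a$ annihilates $\B_1$ in $\textbf{BA}[x_a, \infty)$ iff it does so in the left half-process and no left particle from the right half arrives at $x_1$ first. The events $\{\R_a \to \B_1\}_{(-\infty, x_1]}$ are mutually exclusive in $a$ with disjoint union $\{\bullet_1 = \B_1\} \cap \{\vec\tau < \infty\}$, and $\mathbf{1}((x_1 \leftarrow \L)_{(x_1, \infty)}) = \mathbf{1}(\cev\tau < \infty)$. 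Hence
\begin{equation}
\sum_b Z(b, 1) = \mathbf{1}\bigl(\bullet_1 = \B_1, \, \vec\tau < \infty, \, \cev\tau < \infty, \, \vec\tau \le \cev\tau\bigr),
\end{equation}
which has expectation $p \vec q \cev q \vec\alpha$ by the independence of $\{\bullet_1 = \B_1\}$ and the two half-processes about $x_1$ together with the definition of $\vec\alpha$.

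The most delicate step is the decoupling used in the first computation: rigorously justifying that on $(\R_1 \rightarrow \B_k)_{[x_1, \infty)}$, the dynamics on $[x_1, x_k]$ and on $(x_k, \infty)$ do not interfere, so that $(0 \leftarrow \L)_{(0, \infty)}$ reduces cleanly to $(x_k \leftarrow \L)_{(x_k, \infty)}$. The remaining steps closely mirror the mass-transport calculations executed in Proposition~\ref{prop:p_pm}.
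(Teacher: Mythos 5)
Your proposal is correct and follows essentially the same route as the paper: the same mass-transport indicator $Z(a,b)=\1\bigl((\R_{a}\rightarrow \B_{b})_{[x_{a},\infty)}\bigr)\1\bigl((x_{b}\leftarrow \L)\bigr)$, the same decoupling argument for $\sum_b Z(1,b)$, and the same decomposition $\{\R_b\rightarrow\B_1\}_{[x_b,\infty)}=\{\R_b\rightarrow\B_1\}_{(-\infty,x_1]}\cap\{\vec\tau\le\cev\tau\}$ for $\sum_b Z(b,1)$. Your use of $\vec\tau\le\cev\tau$ (rather than the strict inequality appearing in the paper's displayed decomposition) is in fact the version consistent with the definition of $\vec\alpha$ and with triple collisions under atomic $\mu$.
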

	
	\begin{proof}
		For integers $a,b\in \mathbb{Z}$, define the indicator variable $Z(a,b)$ by
		\begin{align}\label{eq:MTP2}
			Z(a,b) &= \1\left( (\R_{a}\rightarrow \B_{b}\not \leftarrow \L)_{[x_{a},\infty)} \land   (x_{b}\leftarrow \L)_{(x_{b},\infty)}   \right) \\
			& \qquad + \1\left( (\R_{a}\rightarrow \B_{b} \leftarrow \L) _{[x_{a},\infty)} \land   (x_{b}\leftarrow \L)_{(x_{b},\infty)}   \right).
		\end{align} 
		{It is easy to see that these indicator variables are diagonally invariant under translation.} Observe that 
		\begin{align}
			\E \left[ \sum_{b\in \mathbb{Z}} Z(1,b)\right] &=\E\bigg[ \1\left(\exists b\in \mathbb{Z}\colon (\R_{1}\rightarrow \B_{b} \not \leftarrow \L)_{[x_{1},\infty)} \land  (x_{b}\leftarrow \L)_{[x_{b},\infty)}   \right) + \\
			& \qquad \qquad  \1 \left( \exists b \in \mathbb Z ( \R_1 \rightarrow  \B_b \leftarrow  \L)_{[x_1, \infty)} \land (x_b \leftarrow \L )_{(x_b, \infty)} \right) \bigg]\\
			& = \E \left[ \1\left( (\R_{1} \rightarrow \B)_{[x_{1},\infty)} \land (0\leftarrow \L)_{[x_{1},\infty)} \right) \right]\\
			&= \P( (0\leftarrow \L)_{(0,\infty)} \land (\R_{1} \rightarrow \B)_{(0,\infty)}). \label{eq:start}
		\end{align}
		
		The sum of the indicator random variables decomposes the two ways---double or triple collisions involving $\b_1$---that 0 can be visited on the event $(\R_1 \rightarrow \B)$. 
		On the other hand, first note that for any $b\in \mathbb{Z}_{<0}$, 
		\begin{align}
			\{\R_{b}\rightarrow \B_{1} \not \leftarrow \L  \}_{_{[x_{b},\infty)}} &= \{\R_{b}\rightarrow \B_{1}  \}_{_{(-\infty,x_{1}]}} \cap \{ \vec{\tau}^{(x_{1})}<\cev{\tau}^{(x_{1})} \},\\
			\{\R_{b}\rightarrow \B_{1}  \leftarrow \L  \}_{_{[x_{b},\infty)}} &= \{\R_{b}\rightarrow \B_{1}  \}_{_{(-\infty,x_{1}]}} \cap \{ \vec{\tau}^{(x_{1})}=\cev{\tau}^{(x_{1})} \}.
		\end{align}
		From this it follows that 
		\begin{align}
			\E \left[ \sum_{b\in \mathbb{Z}} Z(b,1) \right]&= \E\bigg[ \1\left(\exists b\in \mathbb{Z}\colon (\R_{b}\rightarrow \B_{1} \not \leftarrow \L)_{[x_{b},\infty)} \land  (x_{1}\leftarrow \L)_{(x_{1},\infty)}   \right) \\
			& \qquad \qquad + \1 \left( \R_b \to \B_1 \leftarrow \L)_{[x_b , \infty)} \land ( x_1  \leftarrow \L )_{(x_1,\infty)}  \right)\bigg] \\
			&= \E \bigg[ \1\left((\R\rightarrow \B_{1} \not \leftarrow \L)_{(-\infty,x_{1}]} \land  (x_{1}\leftarrow \L)_{[x_{1},\infty)}  \land ( \vec{\tau}^{(x_{1})}<\cev{\tau}^{(x_{1})} ) \right)\\
			& \qquad \qquad + \1 \left( \R \to \B_1 \leftarrow \L)_{[- \infty, \infty )} \land ( x_1  \leftarrow \L )_{(x_1,\infty)} \land \vec{\tau}^{(x_{1})}=\cev{\tau}^{(x_{1})}  \right)\bigg]\\
			&= p \vec q \cev q ( \vec{\alpha} - \hat \alpha) + p \vec q \cev{q}^{2} \hat \alpha. \label{eq:end}
		\end{align}
		Proposition~\ref{prop:MTP} gives the claimed equality between \eqref{eq:start} and \eqref{eq:end}.  
	\end{proof}

	Now we are ready to prove Lemma \ref{lemma:main_recursion}.
	
	\begin{proof}[\textbf{Proof of Lemma \ref{lemma:main_recursion}}]
		To show \eqref{eq:first}, we write 
		\begin{align}
			\cev{q} & = \P\left( (0\leftarrow \L)_{[0,\infty)} \right)\\
			& = \P\left( (0\leftarrow \L)_{[0,\infty)} \land (\R_{1})\right) + p\P\left( (0\leftarrow \L)_{[0,\infty)} \mid (\B_{1})\right) \\
			&\hspace{1cm}+ (1-\lambda)(1-p)\P\left( (0\leftarrow \L)_{[0,\infty)} \mid (\L_{1})\right) \\
			& =\P\left( (0\leftarrow \L)_{[0,\infty)} \land (\R_{1})\right) + p\cev{q}^{2} + (1-\lambda)(1-p). \label{eq:end2}
		\end{align}
		We claim that 
		\begin{align}
			\P\left( (0\leftarrow \L)_{[0,\infty)} \land  (\R_{1})\right) = 
			p\vec{q}\cev{q}(\vec{\alpha} - \hat \alpha)  + p \vec q \cev q^2 \hat \alpha + \lambda(1-p)\cev{q} - p\vec{q}\cev{q}(1-\cev{q})-p\vec{q}\cev{q}^{2}\vec{\alpha}-\cev{q}\vec{\beta}. \label{eq:expanded}
		\end{align}
		The assertion follows from substituting \eqref{eq:expanded} into \eqref{eq:end2} and some routine algebra:
		\begin{align}
			\cev q &= p\vec{q}\cev{q}(\vec{\alpha} - \hat \alpha)  + p \vec q \cev q^2 \hat \alpha + \lambda(1-p)\cev{q} - p\vec{q}\cev{q}(1-\cev{q})-p\vec{q}\cev{q}^{2}\vec{\alpha}-\cev{q}\vec{\beta}  + p \cev q^2 + (1-\lambda)(1-p) \\
			&= p\vec{q}\cev{q}(\vec{\alpha} - \hat \alpha -1)  + p \vec q \cev q^2( \hat \alpha +1 - \vec \alpha ) + \lambda(1-p)\cev{q}  -\cev{q}\vec{\beta}  + p \cev q^2 + (1-\lambda)(1-p).
		\end{align}
		Rearranging, applying the identity $\vec \alpha + \cev \alpha = 1 + \hat \alpha$ twice, and factoring gives \eqref{eq:first}:
		\begin{align}
			\cev{q}\vec{\beta} &=  p(  1+ \vec q \cev \alpha) \cev q^2   + [\lambda(1-p) -p\vec{q} \cev \alpha -1] \cev{q}   + (1-\lambda)(1-p) \\
			&= (\cev q -1) ( p \cev q + p \cev \alpha \vec q \cev q - (1-\lambda)(1-p) ).
		\end{align}

		Next we show that \eqref{eq:expanded} holds. By partitioning on the possible collisions of $\R_{1}$ and using Proposition \ref{prop2},
		\begin{align}
			\P\left( (0\leftarrow \L)_{[0,\infty)} \land (\R_{1})\right) &= \P\left( (0\leftarrow \L)_{[0,\infty)} \land (\R_{1}\rightarrow \B)_{[x_{1},\infty)}\right) \\
			&\qquad +  \P\left( (0\leftarrow \L)_{[0,\infty)} \mid  (\R_{1}\rightarrow \L)_{[x_{1},\infty)}\right) \P\left( (\R_{1}\rightarrow \L)_{[x_{1},\infty)} \right) \\
			& \qquad \qquad +\P\left( (0\leftarrow \L)_{[0,\infty)} \land (\R_{1}\nrightarrow \bullet) \right)\\
			& = p\vec{q}\cev{q}\vec \alpha +  p \vec q \cev q^2 \hat \alpha  +\cev{q}\P\left( (\R_{1}\rightarrow \L)_{[x_{1},\infty)}\right) +0.
		\end{align}
		By \eqref{eq:1}, we have 
		\begin{align}
			\P\left( (\R_{1} \rightarrow \B)_{[x_{1},\infty)} \right) = p\vec{q} (1-\cev{q}) + p\vec{q}\cev{q}\vec{\alpha}  .\label{eq:RB}
		\end{align}
		Rearranging the identity 
		$$\P(\R_1) = \lambda (1-p) = \P( (\R_1 \rightarrow \B)_{[x_1, \infty)}) +\P( (\R_1 \rightarrow \L)_{[x_1, \infty)} )+ \vec \beta, $$ 
		and using \eqref{eq:RB} gives
		\begin{align}
			\P\left( (\R_{1}\rightarrow \L)_{[x_{1},\infty)}\right)&=   \lambda(1-p) - \P\left( (\R_{1}\rightarrow \B)_{[x_{1},\infty)}\right) - \vec{\beta}  \\
			&=   \lambda(1-p) - p\vec{q}(1-\cev{q})-p\vec{q}\cev{q}\vec{\alpha}  -\vec{\beta} \label{eq:p4_eqation}
		\end{align}
		Combining the two equations gives the claim. A symmetric argument gives \eqref{eq:second}.

		To show \eqref{eq:third}, we observe that \eqref{eq:1} implies
		\begin{align}
			\P(\R_{1}\rightarrow \L) = \lambda(1-p) - \vec{\beta}- \Big( p\vec{\alpha}\vec{q}\cev{q}+p\vec{q}(1-\cev{q}) \Big)  
		\end{align}
		which, by \eqref{eq:3}, is equal to
		\begin{align}
			\P(\R\leftarrow \L_{-1})= (1-\lambda)(1-p) - \cev{\beta} - \Big( p\cev{\alpha}\vec{q}\cev{q}  +p(1-\vec{q})\cev{q} \Big).
		\end{align}
		Combining these equations gives \eqref{eq:third}. 
	\end{proof}

	Next, we use Lemma \ref{lemma:main_recursion} to show that the tuples $(\cev{\alpha},\cev{q})$, $(\cev{\alpha},\cev{q})$, and $(\cev{\alpha},\vec{\alpha},\cev{q},\vec{q})$ satisfy certain polynomial equations depending on whether $\cev{q}$ or $\vec{q}$ equal 1 or all moving particles vanish almost surely.
	
	
	\begin{prop}\label{prop:trichotomy}
		For an arbitrary spacing distribution and parameters, the following hold:
		\begin{enumerate}[label = (\roman*), leftmargin=*]
			\item If $\vec{q}=1$, then $ (\vec{\alpha}, \cev{q})$ satisfy
			\begin{equation}\label{thm1:eq1}
				p\vec{\alpha} \cev{q}^{2} - (p\cev{\alpha}+\lambda(1-p)) \cev{q} + (1-\lambda)(1-p)=0.
			\end{equation}
			\item If $\cev{q}=1$, then $(\cev{\alpha}, \vec{q})$ satisfy
			\begin{equation}\label{thm1:eq2}
				p\cev{\alpha} \vec{q}^{2} - (p\vec{\alpha}+(1-\lambda)(1-p)) \vec{q} + \lambda (1-p)=0.
			\end{equation}
			\item If $\min(\vec{q},\cev{q})<1$ and $\vec{\beta}=\cev{\beta}=0$, then $(\cev{\alpha},\vec{\alpha},\cev{q},\vec{q})$ satisfy
			\begin{align}\label{thm1:eq3}
				\begin{cases}
					p\cev{\alpha}\vec{q}^{2} + \left( ((1-\lambda)\vec{\alpha}-\lambda \cev{\alpha})(1-p)+p \right) \vec{q} - \lambda(1-p)=0 \\
					p\vec{\alpha}\cev{q}^{2} + \left( (\lambda \cev{\alpha}-(1-\lambda)\vec{\alpha})(1-p)+p \right)\cev{q}- (1-\lambda)(1-p)=0
				\end{cases}.
			\end{align}
		\end{enumerate}
	\end{prop}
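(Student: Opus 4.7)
\textbf{Proof proposal for Proposition \ref{prop:trichotomy}.}
The proof is an algebraic exercise using the three identities \eqref{eq:first}--\eqref{eq:third} of Lemma \ref{lemma:main_recursion}. The cases correspond to three natural ways of ``collapsing'' the system by zeroing out certain factors.

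\textbf{Case (i):} Suppose $\vec{q}=1$. Plugging this into \eqref{eq:second} forces the left-hand side to vanish, so $\vec{q}\cev{\beta} = \cev{\beta} = 0$. With $\vec{q}=1$ and $\cev{\beta}=0$, identity \eqref{eq:third} rearranges to give an explicit expression for $\vec{\beta}$ in terms of $\cev{q}$,
\[
\vec{\beta} \;=\; p\cev{q}(\cev{\alpha}-\vec{\alpha}) + p(\cev{q}-1) - (1-2\lambda)(1-p).
\]
Substituting this $\vec{\beta}$ together with $\vec{q}=1$ back into \eqref{eq:first} and expanding both sides, the terms linear in $p\cev{\alpha}\cev{q}^{2}$, $p\cev{q}^{2}$, $p\cev{q}$ cancel in such a way that the coefficient of $\cev{q}^{2}$ becomes $p\vec{\alpha}$, the coefficient of $\cev{q}$ becomes $-(p\cev{\alpha}+\lambda(1-p))$, and the constant term becomes $(1-\lambda)(1-p)$, giving \eqref{thm1:eq1}. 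Case (ii) follows by the same manipulation with the roles of the two halves swapped, using \eqref{eq:first} to conclude $\vec{\beta}=0$ and then eliminating $\cev{\beta}$ via \eqref{eq:third}.

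\textbf{Case (iii):} Suppose $\vec{\beta}=\cev{\beta}=0$ and $\min(\vec{q},\cev{q})<1$. Setting $\vec{\beta}=0$ in \eqref{eq:first} forces either $\cev{q}=1$ or
\begin{equation}\label{eq:A}
p\cev{\alpha}\vec{q}\cev{q} + p\cev{q} - (1-\lambda)(1-p) = 0,
\end{equation}
and setting $\cev{\beta}=0$ in \eqref{eq:second} forces either $\vec{q}=1$ or
\begin{equation}\label{eq:B}
p\vec{\alpha}\vec{q}\cev{q} + p\vec{q} - \lambda(1-p) = 0.
\end{equation}
In the generic subcase where both $\vec{q}<1$ and $\cev{q}<1$, both \eqref{eq:A} and \eqref{eq:B} hold. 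Solving \eqref{eq:B} for $\cev{q}$ (note that $\lambda(1-p)>0$ forces $\vec{q}>0$), substituting into \eqref{eq:A}, clearing denominators by multiplying through by $\vec{\alpha}\vec{q}$, and collecting terms produces
\[
p\cev{\alpha}\vec{q}^{2} + \Big(((1-\lambda)\vec{\alpha}-\lambda\cev{\alpha})(1-p) + p\Big)\vec{q} - \lambda(1-p) = 0,
\]
which is the first equation of \eqref{thm1:eq3}; the second is obtained symmetrically. The boundary subcases where exactly one of $\vec{q},\cev{q}$ equals $1$ must be verified separately: here only one of \eqref{eq:A},\eqref{eq:B} is nontrivially constraining, but combining it with \eqref{eq:third} (which simplifies to a linear identity once $\vec{\beta}=\cev{\beta}=0$ and one of $\vec{q},\cev{q}$ is set to $1$) forces the claimed quadratics to hold at that boundary value.

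\textbf{Main obstacle.} The computations are in principle routine, but the expansions in case (i) and in the elimination step of case (iii) involve many terms that happen to cancel; the only subtle point is the boundary handling in case (iii) when exactly one of $\vec{q},\cev{q}$ equals $1$, where one must verify consistency between \eqref{eq:third} and the quadratics rather than directly reading them off from \eqref{eq:A}--\eqref{eq:B}.
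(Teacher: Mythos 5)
Your overall route is the same as the paper's: use \eqref{eq:second} to force $\cev{\beta}=0$ when $\vec{q}=1$, eliminate $\vec{\beta}$ via \eqref{eq:third}, and substitute into \eqref{eq:first} for parts (i)--(ii); and for part (iii) reduce \eqref{eq:first}--\eqref{eq:second} to the two bracketed factors vanishing and then eliminate one variable. The algebra you describe does check out. There is, however, one genuine omission: the statement asserts that $\cev{q}\in[0,1]$ \emph{is determined by} \eqref{thm1:eq1}, and a quadratic can have two roots in $[0,1]$, so deriving the equation is not enough --- you must also show the root in $[0,1]$ is unique. The paper does this by a sign analysis of the quadratic $f$ on the left of \eqref{thm1:eq1}: $f(0)=(1-\lambda)(1-p)\ge 0$, and $f(1)=p(\vec{\alpha}-\cev{\alpha})+(1-2\lambda)(1-p)\le -\cev{\beta}\le 0$, where the inequality uses $\cev{\alpha}\ge\vec{\alpha}$ (which in turn is extracted from the nonnegativity of $\vec{\beta}$ in the expression obtained from \eqref{eq:third}). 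Since the leading coefficient $p\vec{\alpha}$ is nonnegative, this pins down a unique root in $[0,1]$. Your write-up proves only that $\cev{q}$ satisfies the equation, so this endpoint argument needs to be added to (i) and (ii).

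One smaller comparison on part (iii): you split into the generic subcase ($\vec{q},\cev{q}<1$) and a boundary subcase where exactly one of them equals $1$, handling the latter by ``combining with \eqref{eq:third}.'' The paper avoids this case split entirely: assuming WLOG $\vec{q}<1$, the second bracketed factor vanishes, and \emph{adding} that identity to \eqref{eq:third} (with $\vec{\beta}=\cev{\beta}=0$) directly yields that the first bracketed factor vanishes as well, with no need to know whether $\cev{q}=1$. Your boundary remark is essentially this same addition trick, but stated too vaguely to count as a verification; either make that computation explicit or adopt the paper's unconditional version. Also, when you clear denominators after solving for $\cev{q}$, note that the denominator is $p\vec{\alpha}\vec{q}$, so the degenerate case $\vec{\alpha}=0$ should be dispatched separately (it is immediate, but division by it is not licensed).
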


	\begin{proof}
		First suppose $\vec{q}=1$. Then the second equation in Lemma \ref{lemma:main_recursion} implies $\cev{\beta}=0$. Plugging in these values to \eqref{eq:first} and \eqref{eq:third} gives 
		\begin{align}\label{eq:p_pm_pf1}
			p(1+\vec{\alpha}) \cev{q}^{2} - (p\vec{\alpha} +\lambda)+1-\lambda+\vec{\beta}) \cev{q} + (1-\lambda)(1-p)=0
		\end{align}
		and 
		\begin{align}\label{eq:p_pm_pf2}
			\vec{\beta}= p\cev{q}(\cev{\alpha}-\vec{\alpha}) + p(\cev{q}-1) - (1-2\lambda)(1-p).
		\end{align}
		Combining these two equations gives \eqref{thm1:eq1}. A similar argument shows (ii). 
		
		To show (iii), first note that Lemma \ref{lemma:main_recursion} with $\vec{\beta}=\cev{\beta}=0$ implies 
		\begin{align}\label{eq:pf_lemma4_eq3}
			(\cev{q}-1)( p\cev{\alpha} \vec{q} \cev{q} +p\cev{q} - (1-\lambda)(1-p) ) &= 0 \\
			(\vec{q}-1)( p \vec{\alpha} \vec{q} \cev{q} + p\vec{q} - \lambda(1-p) ) &= 0 \\
			p\vec{q}\cev{q}(\cev{\alpha}-\vec{\alpha}) + p(\cev{q}-\vec{q}) -(1-2\lambda)(1-p)  &=  0.
		\end{align}
		Without loss of generality we assume $\vec{q}<1$. Hence the second factor of the second equation in \eqref{eq:pf_lemma4_eq3} is zero. Adding this equation to the last equation in \eqref{eq:pf_lemma4_eq3} shows that the second factor of the first equation in \eqref{eq:pf_lemma4_eq3} is zero. Hence we have  
		\begin{align}\label{eq:pf_J_closed_eq2}
			p\cev{\alpha} \vec{q} \cev{q} +p\cev{q} - (1-\lambda)(1-p)  &= 0 \\
			p \vec{\alpha} \vec{q} \cev{q} + p\vec{q} - \lambda(1-p)  &= 0.
		\end{align}
		Solving the first equation for $\cev{q}$ and plugging into the second yields the first equation in \eqref{thm1:eq3}. A similar argument for $\cev{q}$ shows the second equation in \eqref{thm1:eq3}. 
	\end{proof}
	{
	Note that from Proposition \ref{prop:trichotomy}, it would be possible to obtain explicit formulas for $\cev{q}$ and $\vec{q}$ if we had explicit formulas for $\cev{\alpha}$ and $\vec{\alpha}$. This is the case for the totally symmetric case, but not in the general asymmetric case. However, it is still enough for us to derive sufficient conditions for fluctuation, as stated in  Theorem \ref{thm:main1}, which we justify below. }
	
	\begin{proof}[{Proof of Theorem \ref{thm:main1}}]
		Suppose $\vec{q},\cev{q}<1$.  Then, by Birkhoff's ergodic theorem, $\vec{\beta}=\cev{\beta}=0$. By Proposition \ref{prop:trichotomy} (iii), $\vec{q}$ and $\cev{q}$ are the unique positive solutions of \eqref{thm1:eq3}. Denote by $\vec{f}(t)$ the quadratic polynomial in $\vec{q}$ in the left hand side of the first equation of \eqref{thm1:eq3}, that is, 
		\begin{align}
		    \vec{f}(t) = p\cev{\alpha}t^{2} + \left( ((1-\lambda)\vec{\alpha}-\lambda \cev{\alpha})(1-p)+p \right) t - \lambda(1-p),
		\end{align}
		whose coefficients may depend on $p$ and $\lambda$. Since it is concave up and $\vec{f}(0)=-\lambda(1-p)<0$, we must have $\vec{f}(1)>0$ in order to have a solution to $\vec{f}(t)=0$ in $(0,1)$. Hence 
		\begin{align}
			\vec{f}(1) 
			&= p(1+\cev{\alpha} -\vec{\alpha}+\lambda(2+\hat{\alpha}) ) + \vec{\alpha}-\lambda(2+\hat{\alpha})>0,
		\end{align} 
		which says that $p$ should be strictly greater than the first fraction in the definition of $F(p,\lambda)$ at \eqref{eq:F}. A symmetric argument for $\cev{q}$ shows that $p$ should dominate the other fraction in $F(p,\lambda)$. Hence we have shown that $\cev{q},\vec{q}<1$ implies $p>F(p,\lambda)$. Taking the contrapositive, we have $p\le F(p,\lambda)$ implies $\vec{q}=1$ or $\cev{q}=1$, which implies $\theta(p,\lambda) = 0$.
	\end{proof}

	\begin{remark} \thlabel{rem:aa}
		\normalfont
		{It follows from \eqref{eq:third} that $\vec q =1 = \cev q$ implies $\cev{\alpha}-\vec{\alpha} = (1-2\lambda)(p^{-1}-1)$. For atomless $\mu$, since $\cev{\alpha}+\vec{\alpha}=1$, we can obtain closed form solutions for $\cev{\alpha}$ and $\vec{\alpha}$. Then one can exactly solve the equation $p = F(p,\lambda,v)$ and obtain a conjectural formula for $p_{c}$, which would be valid in the regime where $\vec q =1 = \cev q$. However, mean-field heuristics from \cite{discrete} suggest that whenever $\lambda \neq 1/2$ or $v \neq 1$ it is only possible that exactly one of $\vec q$ or $\cev q$ can equal 1. Indeed, the density of what the authors call the ``minority species" decays at a summable rate, thus the expected number of that species to reach the origin is finite. Assuming this heuristic, it follows that $\vec q =1 = \cev q$ and atomless $\mu$ imply $\lambda=1/2$ and $\cev{\alpha}=\vec{\alpha}=1/2$, yielding $p_{c}=1/4$. On the other hand, when $\lambda=1/2$ and $v\ne 1$, we believe that $\cev{q}\ne \vec{q}$. So it seems that $\cev{q}=\vec{q}=1$ is only possible in the totally symmetric case. We emphasize that providing a conjectural closed form for $p_c(\lambda,v)$ is an open problem that, to our knowledge, is not contained in the physics literature.}
	\end{remark}

	

	\vspace{0.2cm}

	\section{Finite survival conditions and Proof of Theorem \ref{thm:main2}} \label{sec:theta_lemma}
	
	The goal of this section is to prove the following lemma, which characterizes the survival probability $\theta$ of a blockade from below using random variable $W$  (see \eqref{eq:W_def_1}) of finite support. 
	
	\begin{lemma} \label{lemma:theta}
		Fix $\mu,p, \lambda$, and $v$. For $\theta = \theta(p,\lambda,v)$ it holds that $$\displaystyle \theta = \max\left( 0, \, \sup_{k\ge 1}\, k^{-1}\E[W(1,k)] \right).$$
	\end{lemma}

	Before we proceed to prove Lemma \ref{lemma:theta}, we derive Theorem \ref{thm:main2} here. 
	
	\begin{proof}[{Proof of Theorem \ref{thm:main2}}]
		Some routine algebra lets us rewrite $F$ from \eqref{eq:F} as
		\begin{align}\label{eq:pc_1/4_3}
			F(p,\lambda)= \frac{1}{4+\hat{\alpha}}+\frac{2+\hat{\alpha}}{4+\hat{\alpha}}
			\max\left(\frac{\lambda(3+\hat{\alpha})-1-\vec{\alpha}}{\lambda(2+\hat{\alpha})+1+\cev{\alpha} - \vec{\alpha}},\,\frac{- (\lambda(3+\hat{\alpha})-1-\vec{\alpha})}{(1-\lambda)(2+\hat{\alpha})+1+\vec{\alpha}-\cev{\alpha}}  \right).
		\end{align}
		Note that one of the two terms in the maximum must be positive,  so this yields  
		\begin{align}
			F(p,\lambda,v) \ge \frac{1}{4+\hat{\alpha}}. \label{eq:unit1}
		\end{align}
		Notice that $\hat \alpha \leq 1$ and is zero in the atomless case. Combining this observation with \eqref{eq:unit1} and \eqref{eq:elementary} gives the claimed lower bound on $F$. {Then the claimed lower bound on $p_{c}^{-}$ follows from Theorem \ref{thm:main1}.}
		
		Lastly, we show the upper bound. Fix $\lambda$. Let $\cev{W}(a,b)$ and $\vec{W}(a,b)$ be as defined in \eqref{eq:W_def_1}. Then 
		\begin{align}
			\E[\cev{W}(1,1)] = p-\lambda(1-p) = p(1+\lambda)-\lambda
		\end{align}
		and 
		\begin{align}
			\E[\vec{W}(-1,-1)] = p-(1-\lambda)(1-p) = p(2-\lambda)-(1-\lambda).
		\end{align}
		Lemma~\ref{lem:vecW} then implies that 
		\begin{align}
			p > \max \left( \f{\lambda}{1 + \lambda}, \f{ 1 - \lambda}{2- \lambda}  \right) = f^*(\lambda) \implies \theta  >0.
		\end{align}
		Thus $p_{c}^{+}(\lambda) \leq f^*(\lambda)$.
	\end{proof}

	In the rest of this section, we prove Lemma~\ref{lemma:theta}. 
	We begin with a superadditivity property of $W$, which is proven in \cite[Lemma 15]{CBA}. For the sake of completeness, we state the lemma as the following proposition. 
	
	\begin{prop}[Lemma 15 in \cite{CBA}]\label{prop:superadditivity_Z}
		Let $a< b < c$ be integers. Then 
		\begin{align}
			W(a,c) \ge W(a,b) + W(b+1,c).
		\end{align}
	\end{prop}
	
	\begin{proof}
		The proof proceeds by considering the chain effect of the surviving left and right particle from $\mathbf{BA}[x_{a},x_{b}]$ and $\mathbf{BA}[x_{b+1},x_{c}]$ in the merged process $\mathbf{BA}[x_{a},x_{c}]$ and showing that the inequality is valid after each type of collision. This statement is proved in \cite[Lemma 15]{CBA} for an extended ballistic annihilation system where moving particles and survive collisions and collision between left and right particles may create a new blockade, and the proof does not use the assumption of symmetric particle velocity and density. Thus, our claim follows as a special case with virtually no modification. 
	\end{proof}
	
\begin{prop} \label{prop:new_sa}
    Suppose that $\theta >0$.  It holds that 
    \begin{align}
     \lim_{k \to \infty} \f{\dot N(1,k) - \dot N_{\mathbb R}(1,k)}{k} = 0 = \lim_{k \to \infty} \f{\dot N(-k,-1) - \dot N_{\mathbb R}(-k,-1)}{k} \label{eq:diff}
    \end{align}
    with $\dot N_{\mathbb R}(a,b)$ the number of blockades that survive in $[x_a,x_b]$ in the full process on $\mathbb R$ with all particles present. 
\end{prop}

\begin{proof}
Let $a < b$ be integers. Define $K_b=b$ if $\vec N(a,b) = 0$, and otherwise let $K_b$ be the smallest integer $k$ such that the leftmost surviving right particle in $\BA[x_a,x_b]$ is annihilated by $\b_k$ in $\BA[x_a,x_k]$. Similarly define $K_a$ as the index of the particle that destroys the rightmost surviving left particle from $\BA[x_a,x_b]$, with $K_a = a$ if no such left particle exists. The assumption that $\theta>0$ implies that all moving particles eventually are annihilated and thus $K_a$ and $K_b$ are almost surely finite. 

We claim that \eqref{eq:diff} follows from the inequality
    \begin{align}
    0 \leq \dot N(a,b) - \dot N_{\mathbb R}(a,b) \leq \cev N(K_b+1,\infty) + \vec N(-\infty, K_a-1)\label{eq:new_sa}.
    \end{align}
    {\color{black}Indeed, note that by the renewal that takes place between arriving moving particles, $\cev N(K_b+1,\infty)$ and  $\vec N(-\infty, K_a-1)$ are independent of $K_b$ and $K_a$, and they are distributed as geometric random variables with success parameters $1-\cev q$ and $1- \vec q$, respectively. As $\cev q$ and $\vec q$ are strictly less than $1$ whenever $\theta >0$, the geometric random variables are almost surely finite. Set $a=1$ and $b=k$. Then one has $\vec{N}(-\infty, K_{1}-1)/k\rightarrow 0$ almost surely since $\vec{N}(-\infty, K_{1}-1)$ is almost surely finite and does not depend on $k$. Also, $\P(\cev{N}(K_{k}+1,\infty)/k > k^{-1})$ is summable as $\cev{N}(K_{k}+1,\infty)$ has exponential tail, so by the Borel-Cantelli lemma, $\cev{N}(K_{k}+1,\infty)/k\rightarrow 0$ almost surely. The first part of \eqref{eq:diff} now follows from \eqref{eq:new_sa} with $a=1$ and $b=k$. The second part follows by the same reasoning with $a=-k$ and $b=-1$.}

    It remains to establish \eqref{eq:new_sa}. Call the left quantity $L:=\dot N(a,b) - \dot N_{\mathbb R}(a,b)$. First, we observe that $L$ counts the number of blockades contained in $I:=[x_a,x_b]$ that survive in the process restricted to $I$ but are destroyed in the full process on $\mathbb R$. With this viewpoint, it follows immediately that $L\geq0$. So, we turn our attention to bounding $L$ from above. Let $\mathscr I$ be the set of all indices of the blockades counted by $L$. Additionally let $\mathscr J$ be the set of indices of the particles counted by $R:={\color{black}\cev{N}}(K_b+1,\infty) + \vec N(-\infty, K_a-1)$. We claim that there is an injection $\pi \colon \mathscr I \to \mathscr J$. Thus, $L \leq R$, which is the statement in \eqref{eq:new_sa}. 
    
  Consider $i \in \mathscr I$. In the full process we must have that $\B_i$ is destroyed by a moving particle $\b_j$. Without loss of generality, we will assume that $\L_j$. If $j \in \mathscr J$, then set $\pi(i) = j$. On the other hand, if $j \notin \mathscr J$, then, by the definition of $\mathscr J$, the event $(K_b \not \leftarrow \L_j)_{(K_b,\infty)}$ occurs. Thus, $({\color{black}\b_i} \leftarrow \L_j)_{\mathbb R}$ occurs only if some $\R_{k}$ with $k \leq K_b$ unleashes $\L_j$ by destroying the blockade that $\L_j$ annihilated with in $\BA[x_{K_b+1},\infty)$. However, our construction is such that no right particles survive in $\BA[x_{K_a}, x_{K_b}]$. Thus, $\R_k$ must have similarly been unleashed by a left moving particle $\L_{j_1}$ from $I^c$. If $j_1 \in \mathscr J$, then set $\pi(i) = j_1$. Otherwise, iterate this reasoning to find a right moving particle $\R_{k_1}$ that was unleashed by some $\L_{j_2}$ and so on. As $\BA[x_{K_a}, x_{K_b}]$ contains no surviving moving particles, this chain must terminate at some $j_\ell \in \mathscr J$. We then set $\pi(i) = j_\ell$. As any given moving particle may unleash at most one other moving particle by destroying the blockade it annihilated with in either $I$ or $I^c$, we have $\pi$ is an injection, as desired. 
\end{proof}

	\begin{prop}\label{prop:theta_ergodic_avg}
		Suppose $\theta>0$. Then almost surely, 
		\begin{align}\lim_{k\rightarrow\infty} \frac{1}{k}\cev{N}(1,k)=\lim_{k\rightarrow\infty} \frac{1}{k}\cev{N}(-k,-1)=\lim_{k\rightarrow\infty} \frac{1}{k}\vec{N}(-k,-1)=\lim_{k\rightarrow\infty} \frac{1}{k}\vec{N}(1,k)=0, \label{eq:cevN}
		\end{align}
		and 
		\begin{align}
			&\lim_{k\rightarrow\infty} \frac{1}{k}\dot{N}(1,k)= \lim_{k\rightarrow\infty} \frac{1}{k}\dot{N}(-k,-1)=\theta. \label{eq:dotN}
		\end{align} 
		
	\end{prop}
	
	\begin{proof}
		Suppose $\theta>0$. We begin by proving the claimed equalities at \eqref{eq:cevN}. For integers $a<b$, define $\dot{N}_{\mathbb{R}}(a,b)=\sum_{a\le i \le b} \mathbf{1}((\B_{i} \text{ survives})_{(-\infty,\infty)})$. We let $\dot{N}_{\mathbb{R}}(a,\infty)=\lim_{b\rightarrow \infty} \dot{N}_{\mathbb{R}}(a,b)$. 
		First observe that for all $k \geq 1$ we have $$\cev{N}(1,k)=\cev{N}_{(0,\infty)}(1,k)\le \cev{N}_{(0,\infty)}(1,\infty).$$ By the renewal property of BA, we see that $\cev{N}(1,\infty)$ is a Geometric$(1-\cev{q})$ random variable. Note that $\cev{q}<1$ since $\theta>0$. Hence $\cev N(1,\infty)$ is almost surely finite. It follows that 
		\begin{align}
		\f{\cev{N}(1,k)}k \leq \f {\cev N(1,\infty)}{k} \rightarrow 0 \text{ almost surely}.\label{eq:geo}
		\end{align}
		
		  As for the limit involving $\cev N(-k,-1)$, by  translation invariance of the restricted process $\mathbf{BA}[x_{a},x_{b}]$, $\cev{N}(-k,-1)$ has the same distribution as $\cev{N}(1,k)$, which we just established is bounded by a Geometric$(1-\cev{q})$ random variable. Hence, $\cev{N}(-k,-1)$ is stochastically dominated by a Geometric$(1-\cev{q})$ random variable. It follows that 
		 $\P( \cev N(-k, -1) /k \geq \epsilon )$ is summable for any $\epsilon >0$. This observation along with the Borel-Cantelli lemma give
	       \begin{align}
	       \cev{N}(-k,-1)/k\rightarrow 0 \text{ almost surely.} \label{eq:bc}
	       \end{align}
	       A similar argument shows that $\vec{N}(-k,-1)/k\rightarrow 0$ and $\vec{N}(1,k)/k\rightarrow 0$ almost surely. This establishes the claims at \eqref{eq:cevN}.

		Next, we establish \eqref{eq:dotN}. Since the indicator for $\B_{i}$ surviving the full process is translation invariant, Birkhoff's ergodic theorem gives 
		\begin{align}
			\lim_{k\rightarrow\infty} \frac{1}{k}\dot{N}_{\mathbb{R}}(1,k)=\lim_{k\rightarrow\infty} \frac{1}{k}\dot{N}_{\mathbb{R}}(-k,-1)=\theta.
		\end{align}
		Proposition~\ref{prop:new_sa} implies that the above limit is also equal to $\lim k^{-1} \dot N(1,k)$ and $\lim k^{-1} \dot N(-k,-1)$, which \eqref{eq:dotN}.
		\end{proof}

	In what follows we fix $\mu,p, \lambda$, and $v$. Recall that $\theta = (1- \cev q)(1-\vec q)$. We will characterize  $\theta$ in terms of the quantity
	\begin{align}
		\theta_0 := \max \left( 0 , \sup_{k\geq 1} \,\, k^{-1} \E[ W(1,k) ] \right) \label{eq:theta0}. 
	\end{align}

	Finally, we derive Lemma \ref{lemma:theta}.
	
	\begin{proof}[Proof of Lemma \ref{lemma:theta}]
	    We first show $\theta\le \theta_{0}$. This inequality is trivial if $\theta=0$, so we may assume $\theta >0$. By Proposition \ref{prop:theta_ergodic_avg}, a trivial rephrasing, and then Fatou's lemma we have
		\begin{align} 
			\theta = \lim_{k \to \infty} k^{-1} W(1,k) &= \E \left[ \liminf_{k\rightarrow \infty}\,\, k^{-1}W(1,k) \right] \le \liminf_{k\rightarrow \infty} \,\,k^{-1} \E[W(1,k)] \leq \theta_{0}.
		\end{align} 
	
		It remains to show $\theta \ge \theta_{0}$. Assume without loss of generality that $\theta_{0}>0$. We first claim that this implies $\theta>0$. Denote $\cev{\theta}=(1-\cev{q})$ and $\vec{\theta}=(1-\vec{q})$ so that $\theta = \cev{\theta}\vec{\theta}$. As $\theta_0>0$, we may fix an integer $k$ with $\E[W(1,k)]>0$. Denote $K_{m}:=km$ for $m=0,1,\dots$.  Observe that $S_{n}:=\sum_{m=0}^{n-1}W(K_{m}+1,K_{m+1})$, $n\ge 1$ is a random walk with positive drift $\E[W(1,k)]>0$. Hence $S_{n}-S_{1}$ for $n\ge 1$ has a positive probability to stay above $0$. Independently with probability $p^{k}>0$, $S_{1}=W(1,K_{1})=k$, meaning that all $k$ particles in the first interval $[x_{1},x_{K_{1}}]=[x_{1},x_{k}]$ are blockades. Hence, with a positive probability, we have $S_{1}=k$ and $S_{n}\ge k+1$ for all $n\ge 2$. 
		
		Now note that by Proposition \ref{prop:superadditivity_Z} we have 
		\begin{align}
			W(1,K_{n})\ge S_{n} \qquad \forall n\ge 1. \label{eq:Klb}
		\end{align}
		Thus we have that $\P(A)>0$, where $A$ denotes the event that $W(1,K_{1})=k$ and $W(1,K_{n})\ge k+1$ for all $n\ge 1$. We claim that, on this event, the origin is not visited from right, which implies $\cev{\theta}>0$. Indeed,  $\textbf{BA}[1,K_{n}]$ has at least $k+1$ surviving blockades since $W(1,K_{n})\ge k+1$, and there are at most $k$ surviving left arrows in $\textbf{BA}[K_{n}+1,K_{n+1}]$ (since $K_{n+1}-K_{n}=k$), the origin is never visited by a left arrow in $\textbf{BA}[1,K_{n+1}]$. Continuing by induction, we see that $A$ implies $\cap_{n=1}^{\infty}(\B_{1}\nleftarrow \L)_{[x_{1}, x_{K_{n}}]}$. By continuity of measure, it follows that $\cev{\theta}=\lim_{n\rightarrow \infty} \P((\B_{1}\nleftarrow \L)_{[x_{1}, x_{K_{n}}]}\,|\, A)\, \P(A)>0$, as desired. A symmetric argument shows $\vec{\theta}>0$. (For this, one may use the fact that the distribution of $W(a+n, b+n)$ does not depend on $n$, so $\E[W(1,k)]>0$ implies $\E[W(-k,-1)]>0$.) Hence $\theta = \cev{\theta}\vec{\theta}>0$, proving the claim. 
		
		
		Now, fix $\delta \in (0,1)$ such that $\theta_{0}>\delta$. We wish to show $\theta>\delta$. Fix an integer $k\ge 1$ such that $k^{-1}\E[W(1,k)]>\delta$. As we previously established that $\theta>0$, we may apply Proposition \ref{prop:theta_ergodic_avg}, \eqref{eq:Klb}, and the strong law of large numbers for $S_{n}$, to conclude that
		\begin{align}\label{eq:pf_theta_formula}
			\theta = \lim_{n\rightarrow\infty} \frac{1}{n}W(1,n) = \liminf_{n\rightarrow\infty} \frac{n}{K_{n}}\frac{1}{n}W(1,K_{n})\ge \liminf_{n\rightarrow\infty} \frac{n}{K_{n}}\frac{1}{n}S_{n} = \f 1k \E[W(1,k)]>\delta.
		\end{align}
		This shows the assertion.
	\end{proof}
	
\begin{remark} \label{rem:theta}
    If we had worked with $\cev W$ instead of $W$, then the indices $K_n$ would be random. This is because $\cev W$ is only suparadditive when combining intervals for which the leftmost interval has no surviving right particles. Thus, the analogues of our $K_n$ are found by extending the configuration until all such right particles are destroyed. Working with $W$, which enjoys the more general superadditivity at Proposition \ref{prop:superadditivity_Z}, lets us avoid any analysis of these random extensions, which is helpful for proving Lemma \ref{lemma:theta}. Specifically, see \eqref{eq:pf_theta_formula} where the equality $(n/K_n)(1/n) = 1/k$ which would be nontrivial to deduce if $K_n$ were random. Indeed, one would need a better understanding of the distance moving particles travel before annihilation. This quantity presumably has an exponential tail, but a proof of this feature is not immediately obvious.  
\end{remark}
	
Next, we obtain our upper bound $p_c(\lambda,v) \leq f^*(\lambda)$ from Theorem~\ref{thm:main2} by applying a weaker if and only if condition for fixation to occur. First we prove a dichotomy concerning moving particle survival.
	
	\begin{prop}\label{lemma:alpha_half_implications}
		The following implications hold:
		\begin{align}
			p\cev{q}\vec{q}(\cev{\alpha}-\vec{\alpha})\le (1-2\lambda)(1-p) & \Longrightarrow  \text{$\vec{\beta}=0$ and $\vec{q}\le \cev{q}$},\\
			p\cev{q}\vec{q}(\cev{\alpha}-\vec{\alpha})\ge (1-2\lambda)(1-p) & \Longrightarrow  \text{$\cev{\beta}=0$ and $\vec{q}\ge \cev{q}$}.
		\end{align}
		In particular, either \textup{[}$\vec{\beta}=0$ and $\vec{q}\le \cev{q}$\textup{]} or \textup{[}$\cev{\beta}=0$ and $\vec{q}\ge \cev{q}$\textup{]} hold.
	\end{prop}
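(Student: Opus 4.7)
The plan is to deduce the proposition from equation \eqref{eq:third} of Lemma \ref{lemma:main_recursion} via a case analysis, using two auxiliary facts. Setting $A := p\vec q \cev q(\cev\alpha - \vec\alpha) - (1-2\lambda)(1-p)$, equation \eqref{eq:third} rewrites as $A + p(\cev q - \vec q) = \vec\beta - \cev\beta$. By left--right symmetry, I would only prove the first implication; the second follows by the same argument with roles swapped, and the ``in particular'' clause is then automatic since every $(p,\lambda)$ satisfies at least one of $A \le 0$ or $A \ge 0$.

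The first ingredient I would establish is $\vec\beta \cdot \cev\beta = 0$. This comes from translation invariance and the Birkhoff ergodic theorem: if both survival probabilities were positive, almost surely a surviving right particle at some position $x$ and a surviving left particle at some $y > x$ would coexist, but such a pair would collide at $(x+y)/2$ at time $(y-x)/2$, contradicting survival.

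The second, more delicate ingredient is the implication $\vec\beta > 0 \Rightarrow \vec q = 1$ (and symmetrically, $\cev\beta > 0 \Rightarrow \cev q = 1$). I would argue in two steps. First, $\vec\beta > 0$ and ergodicity yield almost surely a surviving right particle starting at some $x^* < 0$, which necessarily visits the origin at time $|x^*|$ in the full process; hence $\theta = 0$, and the product identity $\theta = (1-\vec q)(1-\cev q)$ forces $\vec q = 1$ or $\cev q = 1$. Second, to rule out $\cev q = 1$, substitute into \eqref{eq:first}: the left-hand side vanishes while the right-hand side becomes $\cev q \vec\beta = \vec\beta > 0$, a contradiction. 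So $\vec q = 1$.

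With these in hand, assume $A \le 0$. If $\vec\beta > 0$, the two ingredients give $\vec q = 1$ and $\cev\beta = 0$, whence the rewritten \eqref{eq:third} reads $A = \vec\beta + p(1-\cev q) \ge \vec\beta > 0$, contradicting $A \le 0$. Hence $\vec\beta = 0$, and the equation reduces to $p(\vec q - \cev q) = A + \cev\beta$; if $\cev\beta = 0$ this directly gives $\vec q \le \cev q$ from $A \le 0$, while if $\cev\beta > 0$ the symmetric form of the second ingredient yields $\cev q = 1 \ge \vec q$. I expect the main obstacle to be the implication $\vec\beta > 0 \Rightarrow \vec q = 1$: the ergodic-theoretic step only shows $\theta = 0$, i.e.\ that \emph{some} one of $\vec q, \cev q$ equals $1$, and it genuinely takes the algebraic input from \eqref{eq:first} to decide which.
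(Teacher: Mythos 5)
Your proof is correct and follows essentially the same route as the paper's: a case analysis on equation \eqref{eq:third}, combined with the two auxiliary facts that at most one moving species survives ($\vec{\beta}\cev{\beta}=0$) and that $\vec{\beta}>0$ forces $\vec{q}=1$ (resp.\ $\cev{\beta}>0$ forces $\cev{q}=1$). The paper simply asserts these auxiliary facts, whereas you derive the second from $\theta=(1-\vec{q})(1-\cev{q})=0$ together with \eqref{eq:first}; this is a valid and worthwhile addition, since the naive claim that a right particle surviving the full process must also reach the origin in the half-process on $(-\infty,0]$ is not immediate.
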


	\begin{proof}	
		This argument relies on \eqref{eq:third}, which we restate for easy reference,
		\begin{align}
			p\vec{q}\cev{q}(\cev{\alpha}-\vec{\alpha}) + p(\cev{q}-\vec{q})-(1-2\lambda)(1-p) =  \vec{\beta}-\cev{\beta}.
		\end{align}
		Suppose $\vec{\beta}>0$. Note that this implies $\vec{q}=1$ and $\cev{\beta}=0$. Hence the last equation in Lemma \ref{lemma:main_recursion} yields 
		\begin{align}
			0<\vec{\beta} &= p\vec{q}\cev{q}(\cev{\alpha}-\vec{\alpha}) - (1-2\lambda)(1-p)+p(\cev{q}-1)\ \\
			&\le p\vec{q}\cev{q}(\cev{\alpha}-\vec{\alpha}) - (1-2\lambda)(1-p).
		\end{align}
		Hence $p\cev{q}\vec{q}(\cev{\alpha}-\vec{\alpha})> (1-2\lambda)(1-p)$. Then taking the contrapositive shows that $p\cev{q}\vec{q}(\cev{\alpha}-\vec{\alpha})\le (1-2\lambda)(1-p)$ implies $\vec{\beta}=0$.

		Next, suppose $p\cev{q}\vec{q}(\cev{\alpha}-\vec{\alpha})\le (1-2\lambda)(1-p)$. We just proved that this implies $\vec{\beta} = 0$, and it remains to show that $\vec{q}\le \cev{q}$. By the last \eqref{eq:third} and since $\vec{\beta}=0$, we get 
		\begin{align}
			\cev{\beta} &= -p\vec{q}\cev{q}(\cev{\alpha}-\vec{\alpha}) + p(\vec{q}-\cev{q})+(1-2\lambda)(1-p) \ge p(\vec{q}-\cev{q}).
		\end{align}
		If $\vec{q}>\cev{q}$, then the above inequality yields $\cev{\beta}>0$, which then implies $\cev{q}=1$. But since $\vec{q}>\cev{q}$, this is a contradiction. Thus we must have $\vec{q}\le \cev{q}$. This shows the first implication in the assertion. A symmetric argument shows the second.   
	\end{proof}
	

    \begin{lemma}\label{lem:vecW}
       The following holds: 
        \begin{align}
        \theta>0 \quad \text{ if and only if } \quad \exists k,\ell \,\, s.t. \,\, \text{$\E[ \cev{W}(1,k) ]>0$ and  $\E[\vec{W}(-\ell, -1)]>0$}.
    \end{align}
    \end{lemma}

\begin{proof}
    First suppose $\theta>0$. According to Lemma \ref{lemma:theta}, we have $\E[W(1,k)]>0$ for some $k\ge 1$. Since $W(1,k) \ge \max(\cev{W}(1,k),\vec{W}(1,k))$ and $\vec{W}(1,k)$, we have $\E[\cev{W}(1,k)]>0$ and $\E[\vec{W}(1,k)]>0$. By translation invariance of the restricted process $\mathbf{BA}[x_{a},x_{b}]$, $\cev{W}(1,k)$ and $\vec{W}(-k,-1)$ have the same distribution. Hence  $\E[\vec{W}(-k,-1)] = \E[\vec{W}(1,k)]>0$. This shows the forward implication.

    Next, suppose that there exists  $k,\ell\ge 1$ such that $\E[ \cev{W}(1,k) ]>0$ and  $\E[\vec{Z}(-\ell, -1)]>0$. Denote $\cev{\theta}=(1-\cev{q})$ and $\vec{\theta}=(1-\vec{q})$ so that $\theta = \cev{\theta}\vec{\theta}$. By Proposition \ref{lemma:alpha_half_implications}, we may assume that $\vec{\beta}=0$ and $\vec{q}\le \cev{q}$, so that $\vec{\theta}\ge \cev{\theta}$. This implies $\theta = \cev{\theta}\vec{\theta} \ge \cev{\theta}^{2}$. Thus it suffices to show that $\cev{\theta}>0$ when $\vec{\beta}=0$ and $\E[\cev{W}(1,k)]>0$ hold for some $k$.

	We claim that under the assumption $\vec{\beta}=0$ there exists a sequence of almost surely finite indices $0=K_{0}<K_{1}<K_{2}<\cdots$ such that the random variables $\cev{W}(K_{m}+1,K_{m+1})$ (indexed by $m$) are i.i.d.\ with $\E[\cev{W}(K_{m}+1,K_{m+1})] = \E[\cev{W}(1,k)]>\delta_{1}$ and $\vec{N}(K_{m}+1, K_{m+1})=0$. Assuming this claim, we note that $S_{n}:=\sum_{m=0}^{n-1}\cev{W}(K_{m}+1,K_{m+1})$ is a random walk with positive drift $\E[\cev{W}(1,k)]>0$. Also, $\cev{W}$ is superadditive in the sense that $\cev{W}(a,c) \ge \cev{W}(a,b)+\cev{W}(b,c)$ for $a<b<c$ whenever $\vec{N}(a,b)=0$ (see \cite[Lemma 12]{HST}). Hence we have 
	\begin{align}
	\cev{W}(1,K_{n})\ge S_{n} \qquad \forall n\ge 1.
	\end{align}
	Hence by the strong law of large numbers, almost surely, 
	\begin{align}
	\liminf_{n\rightarrow\infty} K_{n}^{-1} \cev{W}(1,K_{n}) \ge \E[\cev{W}(1,k)]>0.
	\end{align} 
	As in the proof of Lemma \ref{lemma:theta}, this implies $\cev{\theta}>0$. 
	
	Now we justify the claim in the previous paragraph concerning the existence of $K_0,K_1,\hdots$. We first consider $\mathbf{BA}[x_{1},x_{k}]$. Let $K_{1}=k$ if $\vec{N}(1,k)=0$. Otherwise, let $\tau_{1}$ be the index of the leftmost such right particle surviving in $\mathbf{BA}[x_{1},x_{k}]$. Define $K_{1}\ge 1$ to be the unique index such that  $(\R_{\tau_{1}}\rightarrow \bullet_{K_{1}})_{[x_{1},x_{K_{1}}]}$ occurs. The value $K_{1}$ exists almost surely since $\vec{\beta}=0$. Furthermore, we have that $\E[\vec{Z}(1,K_{1})] = \E[\vec{Z}(1,k)]>\delta$ since all particles in the interval $[x_{\tau_{1}},x_{K_{1}}]$ get annihilated in $\mathbf{BA}[x_{1},x_{k}]$. This also yields $\vec{N}(x_{1},x_{K_{1}})=0$. Note that $\mathbf{BA}[x_{K_{1}+1},\infty)$ is independent and has the same law as $\mathbf{BA}[x_{1},\infty)$. Iterating the same procedure starting from $\mathbf{BA}[x_{K_{1}+1},x_{K_{1}+k}]$, we can find a finite index $K_{2}>K_{1}$ such that $\cev{W}(K_{0}+1,K_{1})$ and $\cev{W}(K_{1}+1,K_{2})$ are i.i.d. Repeating this procedure and using translation invariance, yields the claimed sequence.
\end{proof}

	
	\section{Continuity of the main quantities}
	\label{sec:continuity}

	In this section, we prove Theorem \ref{thm:continuity}. Recall that if $\mathfrak{X}$ is a topological space, we say a function  $f:\mathfrak{X}\rightarrow \mathbb{R}$ is \textit{upper semi-continuous} (USC) (resp., \text{lower semi-continuous} (LSC)) if $\{x\in \mathfrak{X}\,|\, f(x)<u \}$ (resp., $\{x\in \mathfrak{X}\,|\, f(x)>u \}$)is open in $\mathfrak{X}$ for all $u\in \mathbb{R}$. If $\mathfrak{X}$ is a metric space, then $f$ is USC (resp., LSC) if and only if there exists a sequence of continuous functions $f_{k}:\mathfrak{X}\rightarrow \mathbb{R}$ such that $f_{k}(x)\searrow f(x)$ (resp., $f_{k}(x)\nearrow f(x)$) for all $x\in \mathfrak{X}$ as $k\rightarrow \infty$.

	We first show semi-continuity of the main quantities by a standard finite approximation argument. 
	
	\begin{prop}\label{prop:LSC}
		Fix arbitrary spacing distributions and parameters $(p,\lambda)\in (0,1)^{2}$.
		\begin{description}
			\item[(i)] The following quantities are LSC in $p$ and $\lambda$: $\cev{q}$, $\vec{q}$, $\cev{\alpha}\cev{q}\vec{q}$,  $\vec{\alpha}\cev{q}\vec{q}$, $\hat{\alpha}\cev{q}\vec{q}$, \\
			$\P(\R_{1}\rightarrow \B)$,  and $\P(\R_{1}\rightarrow \B)$.
			
			\vspace{0.1cm}
			\item[(ii)] The following quantities are USC in $p$ and $\lambda$: $\theta$, $\cev{\beta}$, and $\vec{\beta}$.
		\end{description}
	\end{prop} 
	
	\begin{proof}
		We first show that $\cev{q}$ and $\vec{q}$ are LSC. Let $\cev{\sigma}$ be the index of the particle that first reaches $0$ from the right, and the reverse for $\vec{\sigma}$. We may set $\cev{\sigma}=\infty$ and $\vec{\sigma}=-\infty$ if there is no such particle, respectively. Note that $\cev{q}=\P(\cev{\sigma}<\infty) = \P(\cev{\tau}<\infty)$, and likewise for $\vec{q}$ and $\vec{\sigma}$. Then we may write  
		\begin{align}
			\cev{q} &=  \sum_{n=1}^{\infty} \P(\cev{\sigma}=n).
		\end{align}
		Since the event that $\cev{\sigma}=n$ is completely determined by the first $n$ particles to the right of the origin, by conditioning on their location and velocity, we see that $ \P(\cev{\sigma}=n)$ is a polynomial in $p$ and $\lambda$. Since $\P(\cev{\sigma}\le n)= \sum_{k=1}^{n}\P(\cev{\sigma}=k)\nearrow \cev{q}$, this shows that $\cev{q}$ is LSC in $p$ and $\lambda$. A similar argument shows the lower semi-continuity of $\vec{q}$, 	$\P(\R_{1}\rightarrow \B)$,  and $\P(\R_{1}\rightarrow \B)$.

		Next, we show the lower semi-continuity of $\cev{\alpha}\cev{q}\vec{q}$. By conditioning on the values of $\vec{\sigma}$, we obtain
		\begin{align}
			\cev{\alpha} \cev{q}\vec{q}=\P(\cev{\tau}\le \vec{\tau},\, \vec{\tau}<\infty) &= \sum_{n=1}^{\infty} \P(\cev{\tau} \le |x_{-n}|\,|\, \vec{\sigma}=n) \P(\vec{\sigma}=n) \\
			&= \sum_{n=1}^{\infty} \P( (0\leftarrow \L)_{[0,v|x_{-n}|]} ) \P(\vec{\sigma}=n).
			\label{eq:alpha_k}
		\end{align}    
		Note that the summands in (\ref{eq:alpha_k}) are continuous in $p$ and $\lambda$. Hence if we let $(\cev{\alpha} \cev{q}\vec{q})_{k}$ denote the $k$th partial sum, then $(\cev{\alpha} \cev{q}\vec{q})_{k}\nearrow \cev{\alpha} \cev{q}\vec{q}$. A similar argument holds for $\vec{\alpha} \cev{q}\vec{q}$. A similar argument shows the lower semi-continuity of $\vec{\alpha}\cev{q}\vec{q}$ and $\hat{\alpha}\cev{q}\vec{q}$. This shows (i). 
		
		Lastly, we show (ii). The upper-semicontinuity of $\theta$ follows from the lower semi-continuity of $\cev{q}$ and $\vec{q}$ we have just shown and the relation $\theta = (1-\cev{q})(1-\vec{q})$. For $\vec{\beta}$, we first write 
		\begin{align}
			\vec{\beta} = \lambda(1-p) - \P(\R_{1}\rightarrow \bullet).
		\end{align} 
		Note that $\P((\R_{1}\rightarrow \bullet_{k})_{[x_{1},x_{k}]} )\nearrow \P(\R_{1}\rightarrow \bullet)$ as $k\rightarrow \infty$. Since the event $(\R_{1}\rightarrow \bullet_{k})_{[x_{1},x_{k}]}$ depends only on the first $k$ particles, its probability is a polynomial in $p$ and $\lambda$. Hence $\P(\R_{1}\rightarrow \bullet)$ is LSC, and the above equation yields that $\vec{\beta}$ is USC. A similar argument applies to $\cev{\beta}$.
	\end{proof}
	
	Next, we show continuity of $\theta$ in all cases. 
	
	\begin{prop}\label{prop:continuity_theta}
		$\theta=\theta(p,\lambda,v)$ is continuous in $p$ and $\lambda$. 
	\end{prop}

	\begin{proof}
		Since the maximum of two LSC functions is LSC, Lemma \ref{lemma:theta} implies that $\theta$ is the maximum of 0, which is LSC, and the minimum of two suprema of finite polynomials in $p$ and $\lambda$, which is again LSC. Thus, $\theta$ is LSC in both $p$ and $\lambda$. On the other hand, $\theta = (1- \cev q)(1- \vec q)$ and both $\cev{q}$ and $\vec{q}$ are LSC by Proposition \ref{prop:LSC}. Hence $1-\cev{q}$ and $1-\vec{q}$ are USC, and since they are both non-negative, their product is also USC. 
	\end{proof}
	
	Our next aim is the continuity of $\cev{\beta}$ and $\vec{\beta}$. In the following proposition, we first deduce the continuity $\cev{\beta}-\vec{\beta}$ from that of $\theta$ by mass transport principle.
	
	\begin{prop}\label{prop:MTP_continuity}
		The followings hold:
		\begin{description}
			\item[(i)] $p(1-\theta) = \P(\R_{1}\rightarrow \B) + \P(\B\leftarrow \L_{1})$.
			
			\vspace{0.1cm}
			\item[(ii)] $\P(\R_{1}\rightarrow \B)$,  $\P(\B\leftarrow \L_{1})$, and $\cev{\beta}-\vec{\beta}$ are continuous in $p$ and $\lambda$.

		\end{description}
	\end{prop}
	
	\begin{proof}
		For $a,b\in \mathbb{Z}$, define the indicator variable
		$Z(a,b) = \1\left( (\R_{a}\rightarrow \B_{b})\lor (\B_{b}\leftarrow \L_{a}) \right).$
		Note that 
		\begin{align}
			\E \left[ \sum_{b\in \mathbb{Z}} Z(b,1) \right] =  \P(\text{$\B_{1}$ is annihilated in the full process}) = p(1-\theta).
		\end{align}
		By the mass transport principle, the above equals  
		\begin{align}
			\E \left[ \sum_{b\in \mathbb{Z}} Z(1,b) \right] = \P(\R_{1}\rightarrow \B) + \P(\B\leftarrow \L_{1}). 
		\end{align}
		This shows (i). 
		
		Next, we show (ii). We first note that both 	$\P(\R_{1}\rightarrow \B)$  and $\P(\R_{1}\rightarrow \B)$ are  continuous in $p$ and $\lambda$. This follows from (i) and  continuity of $\theta$ (Proposition \ref{prop:continuity_theta}) and the lower semi-continuity of both $\P(\R_{1}\rightarrow \B)$  and $\P(\R_{1}\rightarrow \B)$ (Proposition \ref{prop:LSC}). Then note that 
		\begin{align}
			\cev{\beta} - \vec{\beta} &= (1-\P\left( \R_{1}\rightarrow \bullet \right)) - (1-\P\left( \bullet \leftarrow\L_{1}\right)) \\
			&= \P\left( \B \leftarrow\L_{1}\right) + \P\left( \R \leftarrow\L_{1}\right)- \P\left( \R_{1}\rightarrow \B \right) - \P\left( \R_{1}\rightarrow \L \right) \\
			&= \P\left( \B \leftarrow\L_{1}\right) - \P\left( \R_{1}\rightarrow \B \right),
		\end{align}
		where the second equality follows from the last equation of Proposition \ref{prop:p_pm}. Since both $\P(\R_{1}\rightarrow \B)$  and $\P(\R_{1}\rightarrow \B)$ are  continuous in $p$ and $\lambda$, this shows the continuity of $\cev{\beta}-\vec{\beta}$. 
	\end{proof}
	
	The following ``pasting lemma'' is standard in topology, see \cite{munkres1974topology}. 
	
	\begin{lemma}[Pasting lemma]\label{lemma:pasting}
		Let $A,B$ be topological spaces and let $f:A\rightarrow B$ be a function. Suppose $A=X\cup Y$, where both $X$ and $Y$ are either open or closed. If $f$ is continuous when restricted to both $X$ and $Y$, then $f$ is continuous. 
	\end{lemma}
	
	\begin{proof}
		Suppose both $X$ and $Y$ are open. Let $U\subseteq B$ be open. Note that 
		\begin{align}
			f^{-1}(U) = \left( f^{-1}(U)\cap X \right)\cup \left( f^{-1}(U)\cap Y \right) = (f|_{X})^{-1}(U) \cup (f|_{Y})^{-1}(U).
		\end{align}
		The last expression is open by the hypothesis. This shows $f$ is continuous. The case when $X$ and $Y$ are both closed can be argued similarly. 
	\end{proof}
	
	Now we show the continuity of $\cev{\beta}$ and $\vec{\beta}$. 
	
	\begin{prop}\label{prop:continuity_beta}
		$\cev{\beta}$ and $\vec{\beta}$ are continuous in $p$ and $\lambda$. 
	\end{prop}
	
	\begin{proof}
		Recall that $\cev{\beta},\vec{\beta}\in [0,1]$ and  $\cev{\beta}\vec{\beta}=0$ in all cases. Hence we have 
		\begin{align}
			\{ \vec{\beta}=0 \} = \{ \cev{\beta}-\vec{\beta}\ge 0 \}.
		\end{align}
		Then since $\cev{\beta}-\vec{\beta}$ is continuous by Proposition \ref{prop:MTP_continuity}, the right hand side is closed being the inverse image of a closed set under a continuous function. So this  shows $\{\vec{\beta}=0\}$ is closed. A similar argument shows that $\{\cev{\beta}=0\}$ is closed. Since $\cev{\beta}\vec{\beta}=0$, we can write $(0,1)^{2}=\{\cev{\beta}=0\}\cup \{\vec{\beta}=0\}$. 
		Since these subsets are closed, by Lemma \ref{lemma:pasting}, it suffices to show that $\cev \beta$ and $\vec \beta$ are continuous when restricted to $\{ \vec{\beta}=0 \}$ and $\{ \cev{\beta}=0 \}$. On the regime where $\cev{\beta}=0$, $\vec{\beta}=\vec{\beta}-\cev{\beta}$ is continuous by Proposition \ref{prop:MTP_continuity}. On the regime where $\vec{\beta}=0$, $\vec{\beta}$ is continuous being a constant function. This shows the continuity of $\vec{\beta}$. A similar argument shows the continuity of $\cev{\beta}$. 
	\end{proof}
	
	
	Finally, we prove Theorem \ref{thm:continuity}.
	
	\begin{proof}[Proof of Theorem \ref{thm:continuity}]
		We have shown continuity of $\theta$, $\cev{\beta}$, and $\vec{\beta}$ in Propositions \ref{prop:continuity_theta} and \ref{prop:continuity_beta}. In order to show continuity of $\cev{q}\vec{q}\hat{\alpha}$, recall \eqref{eq:RB} and its symmetric counterpart:
		\begin{align}\label{eq:arrow-block-collision-prob}
			\P\left( (\R_{1} \rightarrow \B) \right) &= p\vec{q} (1-\cev{q}) + p\vec{q}\cev{q}\vec{\alpha} \\
			\P\left( (\B \leftarrow \L_{1}) \right) &= p\cev{q} (1-\vec{q}) + p\vec{q}\cev{q}\cev{\alpha}.
		\end{align}
		By Proposition \ref{prop:MTP_continuity}, the left-hand-sides are continuous in all cases. Hence by adding the two equations and using the relation $\cev{\alpha}+\vec{\alpha}=1+\hat{\alpha}$,
		\begin{align}
			\vec{q}+\cev{q}-2\cev{q}\vec{q} + \cev{q}\vec{q}(\cev{\alpha}+\vec{\alpha}) = 1-(1-\cev{q})(1-\vec{q}) + \cev{q}\vec{q}\hat{\alpha} = \textup{continuous in $p$ and $\lambda$}.
		\end{align}
		Since $\theta=(1-\cev{q})(1-\vec{q})$ is continuous by Proposition \ref{prop:continuity_theta}, it follows that $\cev{q}\vec{q}\hat{\alpha}$ is continuous.

		Next, assume $\theta>0$. Then $\cev{q},\vec{q}<1$, so we can write $1-\cev{q} = \theta/(1-\vec{q}) $. Since $\theta$ is continuous and $1-\vec{q}$ is positive and USC, it follows that $1-\cev{q}$ is LSC. This implies that $\cev{q}$ is USC. But since $\cev{q}$ is LSC by Proposition \ref{prop:LSC}, this shows that $\cev{q}$ is continuous. A similar argument shows $\vec{q}$ is continuous.  Recall that the left-hand-side of \eqref{eq:arrow-block-collision-prob} is continuous in all cases. Since $\cev{q}$ and $\vec{q}$ are continuous on $\theta>0$, it follows that $\vec{q}\cev{q}\vec{\alpha}$ is also continuous on $\theta>0$. A similar argument shows that $\vec{q}\cev{q}\cev{\alpha}$ is continuous on $\theta>0$. Note that that $\cev q, \vec q >0$ and that $f/g$ is continuous whenever $f$ and $g$ are continuous with $g\neq 0$. Hence by writing
		\begin{align}
			\cev{\alpha} = (\cev{\alpha}\cev{q}\vec{q})/\cev{q}\vec{q}, \qquad \vec{\alpha} = (\vec{\alpha}\cev{q}\vec{q})/\cev{q}\vec{q}, \qquad \hat{\alpha} = (\hat{\alpha}\cev{q}\vec{q})/\cev{q}\vec{q},
		\end{align}
		we see that $\cev{\alpha}$, $\vec{\alpha}$, and $\hat{\alpha}$ are continuous. 
	\end{proof}

	\section*{Acknowledgments}
	The authors appreciate valuable comments from Rick Durrett and Tom Liggett. Junge was partially supported by NSF Grant DMS \#1855516 and Lyu was partially supported by NSF Grant DMS \#2010035.

	\bibliographystyle{amsalpha}
	\bibliography{BA}

\newcommand{\etalchar}[1]{$^{#1}$}
\providecommand{\bysame}{\leavevmode\hbox to3em{\hrulefill}\thinspace}
\providecommand{\MR}{\relax\ifhmode\unskip\space\fi MR }
\providecommand{\MRhref}[2]{%
  \href{http://www.ams.org/mathscinet-getitem?mr=#1}{#2}
}
\providecommand{\href}[2]{#2}
\begin{thebibliography}{BNRL93b}

\bibitem[BGJ18]{BGJ}
Debbie {Burdinski}, Shrey {Gupta}, and Matthew {Junge}, \emph{{The upper
  threshold in ballistic annihilation}}, (2018, arXiv ID: 1805.10969.

\bibitem[BJL{\etalchar{+}}20]{CBA}
Luis Benitez, Matthew Junge, Hanbaek Lyu, Maximus Redman, and Lily Reeves,
  \emph{Three-velocity coalescing ballistic annihilation}, 2020, arXiv ID:
  2010.15855.

\bibitem[BM20]{bullets2}
Nicolas Broutin and Jean-Fran{\c{c}}ois Marckert, \emph{The combinatorics of
  the colliding bullets}, Random Structures \& Algorithms \textbf{56} (2020),
  no.~2, 401--431.

\bibitem[BNRL93a]{continuous}
E~Ben-Naim, S~Redner, and F~Leyvraz, \emph{Decay kinetics of ballistic
  annihilation}, Physical review letters \textbf{70} (1993), no.~12, 1890.

\bibitem[BNRL93b]{b2}
\bysame, \emph{Decay kinetics of ballistic annihilation}, Physical review
  letters \textbf{70} (1993), no.~12, 1890.

\bibitem[CPY90]{b3}
G.~F. Carnevale, Y.~Pomeau, and W.~R. Young, \emph{Statistics of ballistic
  agglomeration}, Physical Review Letters \textbf{64} (1990), no.~24,
  2913--2916.

\bibitem[CRS18]{CRS}
M.~Cabezas, L.~T. Rolla, and V.~Sidoravicius, \emph{Recurrence and density
  decay for diffusion-limited annihilating systems}, Probab. Theory Related
  Fields \textbf{170} (2018), no.~3-4, 587--615. \MR{3773795}

\bibitem[DGJ{\etalchar{+}}19]{parking}
Michael Damron, Janko Gravner, Matthew Junge, Hanbaek Lyu, and David Sivakoff,
  \emph{Parking on transitive unimodular graphs}, The Annals of Applied
  Probability \textbf{29} (2019), no.~4, 2089--2113.

\bibitem[DKJ{\etalchar{+}}19]{bullets}
Brittany Dygert, Christoph Kinzel, Matthew Junge, Annie Raymond, Erik Slivken,
  Jennifer Zhu, et~al., \emph{The bullet problem with discrete speeds},
  Electronic Communications in Probability \textbf{24} (2019).

\bibitem[DLS20]{damron2020stretched}
Michael Damron, Hanbaek Lyu, and David Sivakoff, \emph{Stretched exponential
  decay for subcritical parking times on $\mathbb{Z}^{d}$}, arXiv preprint
  arXiv:2008.05072 (2020).

\bibitem[DRFP95]{b4}
Michel Droz, Pierre-Antoine Rey, Laurent Frachebourg, and Jarosław Piasecki,
  \emph{Ballistic-annihilation kinetics for a multivelocity one-dimensional
  ideal gas}, Physical Review \textbf{51} (1995), no.~6, 5541--5548 (eng).

\bibitem[EF85a]{2speeds}
Yves Elskens and Harry~L. Frisch, \emph{Annihilation kinetics in the
  one-dimensional ideal gas}, Phys. Rev. A \textbf{31} (1985), 3812--3816.

\bibitem[EF85b]{b5}
\bysame, \emph{Annihilation kinetics in the one-dimensional ideal gas}, Phys.
  Rev. A \textbf{31} (1985), 3812--3816.

\bibitem[FL18]{foxall2018clustering}
Eric Foxall and Hanbaek Lyu, \emph{Clustering in the three and four color
  cyclic particle systems in one dimension}, Journal of Statistical Physics
  \textbf{171} (2018), no.~3, 470--483.

\bibitem[HST21]{HST}
John Haslegrave, Vladas Sidoravicius, and Laurent Tournier, \emph{Three-speed
  ballistic annihilation: phase transition and universality}, Selecta
  Mathematica \textbf{27} (2021), no.~5, 1--38.

\bibitem[HT20]{HT}
John Haslegrave and Laurent Tournier, \emph{Combinatorial universality in
  three-speed ballistic annihilation}, 2020, arXiv ID: 2004.09119.

\bibitem[KR84]{recombination}
K.~Kang and S.~Redner, \emph{Scaling approach for the kinetics of recombination
  processes}, Phys. Rev. Lett. \textbf{52} (1984), 955--958.

\bibitem[KRL95a]{discrete}
PL~Krapivsky, S~Redner, and F~Leyvraz, \emph{Ballistic annihilation kinetics:
  The case of discrete velocity distributions}, Physical Review E \textbf{51}
  (1995), no.~5, 3977.

\bibitem[KRL95b]{b8}
\bysame, \emph{Ballistic annihilation kinetics: The case of discrete velocity
  distributions}, Physical Review E \textbf{51} (1995), no.~5, 3977.

\bibitem[KS01]{b9}
PL~Krapivsky and Cl{\'e}ment Sire, \emph{Ballistic annihilation with continuous
  isotropic initial velocity distribution}, Physical review letters \textbf{86}
  (2001), no.~12, 2494.

\bibitem[MS84]{dla_fractal}
P~Meakin and H~E Stanley, \emph{Novel dimension-independent behaviour for
  diffusive annihilation on percolation fractals}, Journal of Physics A:
  Mathematical and General \textbf{17} (1984), no.~4, L173.

\bibitem[Mun74]{munkres1974topology}
James~R Munkres, \emph{Topology; a first course [by] james r. munkres},
  Prentice-Hall, 1974.

\bibitem[Pia95]{b12}
Jaros{\l}aw Piasecki, \emph{Ballistic annihilation in a one-dimensional fluid},
  Physical Review E \textbf{51} (1995), no.~6, 5535.

\bibitem[ST17]{ST}
Vladas Sidoravicius and Laurent Tournier, \emph{Note on a one-dimensional
  system of annihilating particles}, Electron. Commun. Probab. \textbf{22}
  (2017), 9 pp.

\bibitem[TEW98]{b7}
Balint Toth, Alexei Ermakov, and Wendelin Werner, \emph{On some annihilating
  and coalescing systems}, Journal of Statistical Physics \textbf{91} (1998),
  no.~5-6, 845--870.

\bibitem[TW83]{anti-particle}
Doug Toussaint and Frank Wilczek, \emph{Particle-antiparticle annihilation in
  diffusive motion}, Journal of Chemical Physics \textbf{78} (1983), no.~5,
  2642--2647 (English (US)).

\end{thebibliography}
	
\end{document}